\newcommand{\qed}{\hskip 5mm \rule{2.5mm}{2.5mm}\vskip 10pt}
\newcommand{\N}{{\mathbb N}}
\newcommand{\proof}{\noindent{\em Proof:\ }}
\newcommand{\abs}[1]{ \left| #1 \right|}
\newcommand{\func}[2]{#1 \left( #2 \right)}
\newcommand{\parent}[1]{ \left( #1 \right)}
\newcommand{\setbuilder}[2]{ \left\{ #1 \mid #2 \right\}}
\numberwithin{equation}{section}
\newcommand{\range}[1]{\mathcal{R} \left( #1 \right)}
\newcommand{\dedotimes}[2]{\overline{#1 \overline{\otimes} #2}^{\delta}}
\newcommand{\dedclosure}[1]{\overline{#1}^{\delta}}
\begin{document}
\newtheorem{theorem}{Theorem}[section]
\newtheorem{definition}[theorem]{Definition}
\newtheorem{lemma}[theorem]{Lemma}
\newtheorem{note}[theorem]{Note}
\newtheorem{corollary}[theorem]{Corollary}
\newtheorem{proposition}[theorem]{Proposition}
\newtheorem{remark}[theorem]{Remark}
\renewcommand{\theequation}{\arabic{section}.\arabic{equation}}
\newcommand{\newsection}[1]{\setcounter{equation}{0} \section{#1}}
\title{Characterisation of conditional weak mixing via ergodicity of the tensor product in Riesz Spaces\footnote{AMS Subject Classification: {46A40; 47A35; 37A25; 60F05}.
Keywords: {Riesz spaces; tensor products; band projections; conditional expectation operators; weak mixing; ergodicity}.
Funded in part by the NRF joint South Africa - Tunisia gant SATN180717350298, grant number 120112.
}}
\author{
 Mohamed Amine Ben Amor\\
 Research Laboratory of Algebra, Topology, Arithmetic, and Order\\
 Tunis-El Manar University, 2092-El Manar, Tunisia\\ \\
 Jonathan Homann\\
 Department of Mathematics,\\  Research Focus Area: Pure and Applied Analytics\\
 North-WestUniversity\\
 Potchefstroom, 2531, South Africa\\ \\
  Wen-Chi Kuo \&
 Bruce A. Watson\footnote{{Supported in part by the Centre for Applicable Analysis and
Number Theory.}}
\footnote{e-mail: b.alastair.watson@gmail.com} \\
 School of Mathematics\\
 University of the Witwatersrand\\
 Private Bag 3, P O WITS 2050, South Africa}

\maketitle
\abstract{We link conditional weak mixing and ergodicity of the tensor product in Riesz spaces. 
In particular, we characterise conditional weak mixing of a conditional expectation preserving system by the ergodicity of its tensor product with itself or other ergodic systems. In order to achieve this we characterise all band projections in the tensor product of two Dedekind complete Riesz spaces with weak order units.}

\parindent=0cm
\parskip=0.5cm
\section{Introduction} \label{s: introduction}
In Eisner et al \cite[Theorem 9.23]{EFHN}, Krengel, \cite[pg. 98]{krengel} and Petersen, \cite[Section 2.6, Theorem 6.1]{ergodic book}, characterisation of weakly mixing systems via the tensor product of ergodic systems is presented. The concept of ergodicity was discussed in the Riesz space setting in \cite{khw-1-B}.

In the classical setting, given a probability space $\parent{\Omega,\mathcal{A},\mu}$ and a measure preserving transformation $\tau : \Omega \rightarrow \Omega$, the measure preserving system $\parent{\Omega,\mathcal{A},\mu,\tau}$ is ergodic if and only if
	\[
	\lim_{n \rightarrow \infty}
	\frac{1}{n}
	\sum_{k=0}^{n-1}
	\func{\mu}{ \func{{\tau}^{-k}}{A} \cap B}
	=
	\func{\mu}{A} \func{\mu}{B}
	\]
for all $A,B \in \mathcal{A}$.
Here $\parent{\Omega,\mathcal{A},\mu,\tau}$ is said to be weakly mixing if
	\[
	\lim_{n \rightarrow \infty}
	\frac{1}{n}
	\sum_{k=0}^{n-1}
	\abs{
	\func{\mu}{ \func{{\tau}^{-k}}{A} \cap B}
	-
	\func{\mu}{A} \func{\mu}{B}
	}
	=0,
	\]
for all $A,B \in \mathcal{A}$.

Various aspects of mixing processes have been considered in the Riesz space setting in \cite{khw-1-B, KRW, KVW} and other for some other aspects of stochastic processes in Riesz spaces see for example \cite{jensen paper}.
We recall from \cite{expectation paper} that an operator, $T$, on a Dedekind complete Riesz space, $E$, with weak order unit is said to be a conditional expectation operator if $T$ is a positive order continuous projection on $E$ which maps weak order units to weak order units and has range $\range{T}$ a Dedekind complete Riesz subspace of $E$ (i.e. $\range{T}$ is order closed in $E$).  We say that a conditional expectation operator, $T$, is strictly positive if $Tx=0$ with $x \in E_+$ implies $x=0$. The setting in which we work is a Dedekind complete Riesz space, $E$, with weak order unit, say $e$. In this setting every band $B$ in $E$ is a principle projection band and is thus the range of a band projection $P$, Here $Pe=p$ is a component of $e$ which is generator of $B$. We recall that $p$ is a component of $e$ if $0\le p\le e$ and $(e-p)\wedge p=0$. This gives bijections between the bands of $E$, the band projections on $E$ and the components of $e$. In the current work, for notational compactness we will work largely in terms of components. We will denote by $C_E(e)$ the components of $e$ in $E$.

In \cite{khw-1-A} we generalised the notion of a  measure preserving system to the Riesz space setting as follows. 
Let $E$ be a Dedekind complete Riesz space with weak order unit, say, $e$, let $T$ be a conditional expectation operator on $E$ with $Te=e$ and let $S$ be a Riesz homomorphism on $E$ with $Se=e$. If $TSp=Tp$, for all components $p$ of $e$, then $\parent{E,T,S,e}$ is called a conditional expectation preserving system. Due to Freudenthal's Spectral Theorem, \cite[Theorem 33.2]{zaanen}, the condition $TSp=Tp$ for all components $p$ of $e$ in the above is equivalent to $TSf=Tf$ for all $f \in E$.

In \cite{khw-1-B} we obtained that the conditional expectation preserving system $\parent{E,T,S,e}$ is ergodic if and only if
\begin{equation} \label{ergodic}
\frac{1}{n} \sum_{k=0}^{n-1} \func{T}{\parent{S^kp} \cdot q}\to Tp \cdot Tq,
\end{equation}
in order, as $n\to\infty$, for all components $p$ and $q$ of $e$. Further, we defined 
 $\parent{E,T,S,e}$ to be conditionally weak mixing if
\begin{equation} \label{weak mixing}
\frac{1}{n} \sum_{k=0}^{n-1} \left|\func{T}{\parent{S^kp} \cdot q}- Tp \cdot Tq\right|\to 0,
\end{equation}
in order, as $n\to\infty$, for all componenets $p$ and $q$ of $e$.
 Here $\cdot$ denote multiplication in the $f$-algebra 
$E_e= \setbuilder{f \in E}{\abs{f} \leq ke \textrm{ for some $k \in {\2R}_+$}}$, the subspace of $E$ of $e$ bounded elements of $E$, see \cite{azouzi, venter, zaanen2}. 
  The $f$-algebra structure on $E_e$ gives $p \cdot q = p\wedge q$ for all components $p$ and $q$ of $e$.
 Further, if $T$ is a conditional expectation operator on $E$ with $Te=e$, then $T$ is also a conditional expectation operator on $E_e$.
 We refer the reader to Aliprantis and Border \cite{riesz book}, Fremlin \cite{fremlin}, Meyer-Nieberg \cite{MN-BL}, Venter \cite{venter} and Zaanen \cite{zaanen} and \cite{zaanen2} for background on Riesz spaces and $f$-algebras.

The main result of the present paper characterises conditional weak mixing in a Riesz space by ergodicity in the tensor product as follows.

\begin{theorem} \label{product spaces theorem}
In a conditional expectation preserving system $\parent{E,T,S,e}$, with $T$ strictly positive, the following are equivalent.
\begin{enumerate}
\item 
\label{product spaces theorem 1}
$\parent{E,T,S,e}$ is conditionally weak mixing.
\item
\label{product spaces theorem 4}
$\parent{\dedotimes{E}{E_1}, T \otimes_\delta T_1, S \otimes_\delta  S_1,e\otimes e_1}$ is ergodic for each ergodic $\parent{E_1,T_1,S_1,e_1}$ with $T_1$ strictly positive.
\item
\label{product spaces theorem 5}
$\parent{\dedotimes{E}{E},T \otimes_\delta T, S \otimes_\delta  S,e\otimes e}$ is ergodic.
\end{enumerate}
\end{theorem}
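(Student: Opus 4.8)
The plan is to establish the cycle \ref{product spaces theorem 1}$\Rightarrow$\ref{product spaces theorem 4}$\Rightarrow$\ref{product spaces theorem 5}$\Rightarrow$\ref{product spaces theorem 1}. The mechanism driving everything is the characterisation of the band projections of $\dedotimes{E}{E_1}$ established earlier, which I would use to reduce the ergodicity test \eqref{ergodic} for any tensor product system to the ``rectangle'' components $p\otimes q$, $p\in C_E(e)$, $q\in C_{E_1}(e_1)$: indeed every component of $e\otimes e_1$ is an increasing order limit of finite disjoint sums of such rectangles. That this reduction is legitimate I would argue in two steps. First, \eqref{ergodic} is additive in each of its two arguments over finite disjoint sums of components (the shift is a Riesz homomorphism, the conditional expectation is linear, and $f$-algebra multiplication distributes over disjoint components), so it holds for all finite disjoint sums of rectangles once it holds for rectangles. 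Second, if $r=\sup_j r_j$ with $r_j$ such finite disjoint sums increasing to a component $r$ of $e\otimes e_1$, then, since the tensor product system is again a conditional expectation preserving system and hence $(T\otimes_\delta T_1)(S\otimes_\delta S_1)^k=T\otimes_\delta T_1$, the remainder $\frac{1}{n}\sum_{k=0}^{n-1}(T\otimes_\delta T_1)\big((S\otimes_\delta S_1)^k(r-r_j)\cdot s\big)$ is dominated by $(T\otimes_\delta T_1)(r-r_j)$ uniformly in $n$, and $(T\otimes_\delta T_1)(r-r_j)\downarrow 0$; a $\limsup$/$\liminf$ comparison in the Dedekind complete space $\range{T\otimes_\delta T_1}$ then passes \eqref{ergodic} from the $r_j$ to $r$, and the symmetric argument handles the second slot.

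For \ref{product spaces theorem 1}$\Rightarrow$\ref{product spaces theorem 4}, let $(E_1,T_1,S_1,e_1)$ be ergodic with $T_1$ strictly positive. By the reduction it suffices to check \eqref{ergodic} for the tensor product system on rectangles $P=p_1\otimes q_1$ and $Q=p_2\otimes q_2$. Since multiplication on $\dedotimes{E}{E_1}$ is componentwise on simple tensors, $(S\otimes_\delta S_1)^kP\cdot Q=(S^kp_1\cdot p_2)\otimes(S_1^kq_1\cdot q_2)$, so $\frac{1}{n}\sum_{k=0}^{n-1}(T\otimes_\delta T_1)\big((S\otimes_\delta S_1)^kP\cdot Q\big)=\frac{1}{n}\sum_{k=0}^{n-1}c_k\otimes d_k$ with $c_k=T(S^kp_1\cdot p_2)$ and $d_k=T_1(S_1^kq_1\cdot q_2)$. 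Writing $c_k=Tp_1\cdot Tp_2+a_k$, conditional weak mixing \eqref{weak mixing} gives $\frac{1}{n}\sum_{k=0}^{n-1}|a_k|\to 0$ in order, ergodicity of $(E_1,T_1,S_1,e_1)$ gives $\frac{1}{n}\sum_{k=0}^{n-1}d_k\to T_1q_1\cdot T_1q_2$ in order, and $0\le d_k\le e_1$. Splitting $c_k\otimes d_k=(Tp_1\cdot Tp_2)\otimes d_k+a_k\otimes d_k$ and using order continuity of the maps $w\otimes(\cdot)$ and $(\cdot)\otimes e_1$ together with $|a_k\otimes d_k|=|a_k|\otimes d_k\le|a_k|\otimes e_1$ then yields $\frac{1}{n}\sum_{k=0}^{n-1}c_k\otimes d_k\to(Tp_1\cdot Tp_2)\otimes(T_1q_1\cdot T_1q_2)=(T\otimes_\delta T_1)P\cdot(T\otimes_\delta T_1)Q$, which is \eqref{ergodic}.

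For \ref{product spaces theorem 4}$\Rightarrow$\ref{product spaces theorem 5}, I would apply \ref{product spaces theorem 4} to the trivial one-point system $(\R,\mathrm{id},\mathrm{id},1)$ — ergodic with strictly positive conditional expectation, and for which the tensor product reduces to $(E,T,S,e)$ itself — to conclude that $(E,T,S,e)$ is ergodic, and then feed this back into \ref{product spaces theorem 4} with $(E_1,T_1,S_1,e_1)=(E,T,S,e)$. For \ref{product spaces theorem 5}$\Rightarrow$\ref{product spaces theorem 1}, fix components $p,q$ of $e$, put $c_k=T(S^kp\cdot q)$, $d=Tp\cdot Tq$, $a_k=c_k-d$. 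Testing ergodicity of the self-product on $p\otimes e$, $q\otimes e$ and using that $x\mapsto x\otimes e$ reflects order convergence shows $\frac{1}{n}\sum_{k=0}^{n-1}c_k\to d$ in order; testing it on $p\otimes p$, $q\otimes q$ shows $\frac{1}{n}\sum_{k=0}^{n-1}c_k\otimes c_k\to d\otimes d$ in $\dedotimes{E}{E}$. Then $\frac{1}{n}\sum_{k=0}^{n-1}a_k\otimes a_k=\frac{1}{n}\sum_{k=0}^{n-1}c_k\otimes c_k-d\otimes\big(\tfrac{1}{n}\sum_{k=0}^{n-1}c_k\big)-\big(\tfrac{1}{n}\sum_{k=0}^{n-1}c_k\big)\otimes d+d\otimes d\to 0$ in $\dedotimes{E}{E}$, and applying the canonical multiplication map $\dedotimes{E}{E}\to E$, $x\otimes y\mapsto x\cdot y$, gives $\frac{1}{n}\sum_{k=0}^{n-1}a_k^2\to 0$ in order. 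Finally $2|a_i||a_j|\le a_i^2+a_j^2$ in the $f$-algebra $E_e$ yields $\big(\frac{1}{n}\sum_{k=0}^{n-1}|a_k|\big)^2\le\frac{1}{n}\sum_{k=0}^{n-1}a_k^2\to 0$, and since $0\le\frac{1}{n}\sum_{k=0}^{n-1}|a_k|\le e$, the order continuous $f$-algebra square root on $[0,e]$ gives $\frac{1}{n}\sum_{k=0}^{n-1}|a_k|\to 0$, i.e. \eqref{weak mixing}.

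\textbf{Main obstacle.} I expect the decisive point to be the reduction to rectangle components used in \ref{product spaces theorem 1}$\Rightarrow$\ref{product spaces theorem 4} (and, in miniature, in \ref{product spaces theorem 5}$\Rightarrow$\ref{product spaces theorem 1}): describing \emph{every} band projection of $\dedotimes{E}{E_1}$ in terms of rectangles is precisely the technical heart flagged in the abstract, and it is where strict positivity of the conditional expectations is needed to keep the tensor product system and that description well behaved. The attendant order-theoretic bookkeeping — that the tensor product of conditional expectation preserving systems is again one, order continuity of the canonical maps $w\otimes(\cdot)$, $(\cdot)\otimes w$ and of the multiplication map $\dedotimes{E}{E}\to E$, and that $x\mapsto x\otimes e$ reflects order convergence — is routine in spirit but must be assembled with care, since it is exactly what allows Cesàro order limits to be transported across the tensor construction.
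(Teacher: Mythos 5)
Your proposal is correct and follows the same skeleton as the paper's proof (the cycle (\ref{product spaces theorem 1})$\Rightarrow$(\ref{product spaces theorem 4})$\Rightarrow$(\ref{product spaces theorem 5})$\Rightarrow$(\ref{product spaces theorem 1}), with the same rectangle computation $\tfrac1n\sum\alpha_k\otimes\beta_k\to\alpha\otimes\beta$ in the first implication and the same expansion $\tfrac1n\sum f_k\otimes f_k\to 0$ followed by the multiplication map $j$ of Remark \ref{remark +} in the last), but it departs from the paper at two points, both legitimately. First, to upgrade the rectangle convergence to ergodicity of $\dedotimes{E}{E_1}$, the paper does not approximate the Ces\`aro averages over general components directly; it deduces $L_{S\otimes_\delta S_1}(p\otimes p_1)=(T\otimes_\delta T_1)(p\otimes p_1)$ from the rectangle limit via the averaging property of conditional expectation operators and the \emph{strict positivity} of $T\otimes_\delta T_1$, then extends the operator identity $L_{S\otimes_\delta S_1}=T\otimes_\delta T_1$ by order continuity together with the order density of rectangles from Theorem \ref{density-components}, and concludes with Corollary \ref{tsm corollary}. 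Your substitute --- additivity of \eqref{ergodic} over finite disjoint unions of rectangles plus a $\limsup$/$\liminf$ comparison using the uniform domination by $(T\otimes_\delta T_1)(r-r_j)\downarrow 0$ --- is a workable alternative (note only that the supremum in Theorem \ref{density-components}(b) is over a net, so the comparison must be run over the directed family of finite suprema, each of which is a disjoint sum of rectangles), and it has the curious feature of not visibly using strict positivity at this step. Second, in (\ref{product spaces theorem 5})$\Rightarrow$(\ref{product spaces theorem 1}) the paper finishes with the Riesz space Koopman--von Neumann Lemma (Theorem \ref{koopman-von neumann lemma}) and criterion (3) of Theorem \ref{weak mixing big theorem}, extracting a density zero sequence $(r_n)$ with $(e-r_n)f_n^2\to0$ and hence $(e-r_n)|f_n|\to0$; your Cauchy--Schwarz bound $\bigl(\tfrac1n\sum_{k=0}^{n-1}|f_k|\bigr)^2\le\tfrac1n\sum_{k=0}^{n-1}f_k^2$ in $E_e$, combined with the fact that $0\le b_n\le 2e$ and $b_n^2\to0$ force $b_n\to0$ (justifiable, e.g.\ via $b_n\le\epsilon e+2\epsilon^{-2}b_n^2$ rather than an appeal to a ``square root''), delivers \eqref{weak mixing} directly and bypasses Koopman--von Neumann entirely --- a more elementary finish. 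Your handling of (\ref{product spaces theorem 4})$\Rightarrow$(\ref{product spaces theorem 5}) via the trivial system is essentially what the paper does implicitly when it invokes Lemma \ref{tensor-ergodic} to conclude that $(E,T,S,e)$ is ergodic before setting $E_1=E$.
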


This result is in Section 4, aspects of  the on tensor products of Dedekind complete Riesz spaces with weak order units required for this are given in Section 3 while general prel;iminaries are presented in Section 2. 

\section{Preliminaries} \label{preliminaries}
From \cite[Lemma 2.1]{KRodW2} it follows that
if $E$ is a Dedekind complete Riesz space and $\parent{f_n}$ is an order convergent sequence in $E$ with order limit $0$
then
$\displaystyle{\frac{1}{n} \sum_{k=0}^{n-1}|f_k|\to 0}$, in order, as $n\to\infty$. Further, the order convergence of the sequence of partial sums 
$\displaystyle{\left(\sum_{k=0}^{n-1}|f_k|\right)}$ implies the order convergence of the sequence of partial sums
$\displaystyle{\left(\sum_{k=0}^{n-1}f_k\right)}$. Moreover, if $\parent{f_n}$ has order limit $f$, then
$\displaystyle{\frac{1}{n} \sum_{k=0}^{n-1}f_k\to f}$, in order, as $n\to\infty$.

Let $\parent{E,T,S,e}$ be a conditional expectation preserving system. For $f \in E$ and $n \in \2N$, we define
	\begin{subequations}
	\begin{align}
	\label{Sn definition}
	S_nf
	&
	:=
	\frac{1}{n}
	\sum_{k=0}^{n-1} S^k f,
	\\
	\label{L definition}
	L_Sf
	&
	:=
	\lim_{n \to \infty}
	S_nf
	,
	\end{align}
	\end{subequations}
where the above limits are order limits, if they exist.

We say that $f \in E$ is $S$-invariant if $Sf=f$. The set of all $S$-invariant $f \in E$ will be denoted $\3I_S:= \setbuilder{f \in E}{Sf=f}.$ The set of $f \in E$ for which $L_Sf$ exists will be denoted $\3E_S$.
As shown in \cite{khw-1-B},  $\3I_S \subset \3E_S$ and $L_Sf=f$, for all $f \in \3I_S$.
Using this, Birkhoff's (Bounded) Ergodic Theorem, \cite[Theorem 3.7]{ergodic paper}, gives that, in a conditional expectation preserving system, $\parent{E,T,S,e}$, for each $f \in E$, the sequence $\parent{S_nf}$ is order bounded in $E$ if and only if $f \in {\mathcal{E}}_S$ and, further to this, for each $f \in {\mathcal{E}}_S$, $L_Sf=SL_Sf$ and $TL_Sf=Tf$. Thus, $L_S: \3E_S \to \3I_S$.
Finally, if $E= {\mathcal{E}}_S$ then $L_S$ is a conditional expectation operator on $E$ with $L_Se=e$. By restricting out attention to $E_e$, we have that $\parent{S_nf}$ is order bounded in $E_e$ for each $f \in E_e$ and so, by Birkhoff's (Bounded) Ergodic Theorem, $E_e \subset {\mathcal{E}}_S$, moreover, $L_Sf \in E_e$, giving that $L_S|_{E_e}$ is a conditional expectation operator on $E_e$.

As in \cite{khw-1-B} we say that the conditional expectation preserving system $\parent{E,T,S,e}$ is ergodic if $L_Sf \in \range{T}$ for all $f \in \3I_S$. As shown in  \cite{khw-1-B}, $\parent{E,T,S,e}$ is ergodic if and only $L_Sf=Tf$ for all $f \in \3I_S$. We now recall some results from \cite{khw-1-B}.
\begin{corollary} \label{tsm corollary}
The conditional expectation preserving system $\parent{E,T,S,e}$ with $E=\3E_S$ is ergodic if and only if $T=L_S$.
\end{corollary}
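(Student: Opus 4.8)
The plan is to run both implications off the characterisation recalled immediately above --- namely that $\parent{E,T,S,e}$ is ergodic if and only if $L_Sf=Tf$ for every $f\in\3I_S$ --- supplemented by the two structural facts already established for the case $E=\3E_S$: first, that $L_S$ is then a conditional expectation operator with $L_Se=e$ and $L_S:\3E_S\to\3I_S$ with $L_Sg=g$ for all $g\in\3I_S$; and second, that $TL_Sf=Tf$ for every $f\in\3E_S$. So no new machinery is needed; the work is just in assembling these pieces in the right order.

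For the reverse implication I would argue as follows. Suppose $T=L_S$ on $E=\3E_S$. Then in particular $Tf=L_Sf$ holds for every $f\in\3I_S$, since $\3I_S\subset\3E_S=E$, and the cited characterisation gives at once that $\parent{E,T,S,e}$ is ergodic. This direction is essentially immediate.

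For the forward implication, assume $\parent{E,T,S,e}$ is ergodic and fix an arbitrary $f\in E=\3E_S$. Since $L_Sf\in\3I_S$ and $L_S$ fixes $\3I_S$ pointwise, we have $L_S\parent{L_Sf}=L_Sf$. Applying the ergodicity characterisation to the element $L_Sf\in\3I_S$ gives $T\parent{L_Sf}=L_S\parent{L_Sf}=L_Sf$. Combining this with the identity $TL_Sf=Tf$ valid on $\3E_S$, we obtain $Tf=L_Sf$. As $f\in E$ was arbitrary, $T=L_S$, completing the proof.

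The only point that needs a little care is this last extension step: the characterisation of ergodicity is a statement only about elements of $\3I_S$, so one must use the retraction $L_S$ (which sends all of $\3E_S$ into $\3I_S$) together with the fact that $T$ is unchanged under pre-composition with $L_S$ in order to upgrade ``$T$ and $L_S$ agree on $\3I_S$'' to ``$T$ and $L_S$ agree on all of $\3E_S=E$''. I do not anticipate any genuine obstacle beyond keeping track of which facts require $E=\3E_S$.
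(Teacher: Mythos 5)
Your proof is correct: both directions follow exactly as you assemble them from the facts recalled in the preliminaries ($\3I_S\subset\3E_S$ with $L_S$ fixing $\3I_S$, the identity $TL_Sf=Tf$ on $\3E_S$, and the characterisation of ergodicity as $T=L_S$ on $\3I_S$), and the key step of using the retraction $L_S:\3E_S\to\3I_S$ to upgrade agreement on $\3I_S$ to agreement on all of $E$ is precisely the right idea. Note that the paper itself gives no proof of this corollary --- it is recalled from the reference on ergodicity in Riesz spaces --- so there is nothing in-paper to compare against, but your derivation is the natural one and I see no gap.
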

	
\begin{corollary} \label{corollary for mixing}
The conditional expectation preserving system $\parent{E,T,S,e}$ is ergodic if and only if
\begin{equation} \label{weak mixing section first equation}
\frac{1}{n} \sum_{k=0}^{n-1} \func{T}{\parent{S^kp} \cdot q}\to Tp \cdot Tq,
\end{equation}
in order as $n\to\infty$, for all components $p$ and $q$ of $e$.
\end{corollary}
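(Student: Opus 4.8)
The plan is to transform the Ces\`aro average in \eqref{weak mixing section first equation} into a statement about the limit operator $L_S$ on the $f$-algebra $E_e$, and then to match that against the definition of ergodicity. First I would use that $T$ is linear and that multiplication in $E_e$ distributes over finite sums to rewrite the average as
\[
\frac1n\sum_{k=0}^{n-1}\func{T}{\parent{S^kp}\cdot q}=\func{T}{\parent{S_np}\cdot q},\qquad S_np=\frac1n\sum_{k=0}^{n-1}S^kp.
\]
Since $p\in E_e\subset\3E_S$, Birkhoff's (Bounded) Ergodic Theorem gives $S_np\to L_Sp$ in order with $L_Sp\in E_e$. Multiplication by the fixed element $q\in E_e$ is order continuous on the Dedekind complete $f$-algebra $E_e$ (see \cite{zaanen2}), and $T$ is order continuous, so $\func{T}{\parent{S_np}\cdot q}\to\func{T}{\parent{L_Sp}\cdot q}$ in order. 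Hence \eqref{weak mixing section first equation} holds for all components $p,q$ if and only if
\[
\func{T}{\parent{L_Sp}\cdot q}=Tp\cdot Tq\quad\text{for all components }p,q;
\]
I abbreviate this as condition $(\star)$.

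Next I would reduce ergodicity to the pointwise identity $L_Sp=Tp$ for all components $p$ of $e$, which I call condition $(\dagger)$. If the system is ergodic, then for a component $p$ we have $L_Sp\in\3I_S$ (as $L_S\colon\3E_S\to\3I_S$), so $\func{T}{L_Sp}=L_Sp$ by ergodicity while $\func{T}{L_Sp}=Tp$ by Birkhoff's theorem, giving $(\dagger)$. Conversely, assume $(\dagger)$. Since $L_S|_{E_e}$ and $T$ are order continuous linear operators agreeing on every component, Freudenthal's Spectral Theorem \cite[Theorem 33.2]{zaanen} (approximating each $f\in E_e$ in order by $e$-simple functions) forces $L_S=T$ on $E_e$; this is the content of Corollary~\ref{tsm corollary} applied to the system on $E_e$. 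For $f\in\3I_S$ with $f\ge0$, each truncation $f\wedge ne\in E_e$ is $S$-invariant (as $S$ is a Riesz homomorphism with $Se=e$), so $\func{T}{f\wedge ne}=L_S\parent{f\wedge ne}=f\wedge ne\in\range T$; letting $n\to\infty$ and using that $\range T$ is order closed yields $f\in\range T$. Splitting a general invariant $f$ into $f^\pm\in\3I_S$ shows $\3I_S\subset\range T$, i.e.\ the system is ergodic.

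Finally I would match $(\dagger)$ with $(\star)$. If $(\dagger)$ holds, then $\func{T}{\parent{L_Sp}\cdot q}=\func{T}{\parent{Tp}\cdot q}=Tp\cdot Tq$ by the averaging (module) property of the conditional expectation $T$ over $\range T$ \cite{expectation paper}, which is $(\star)$. For the reverse, set $h:=L_Sp-Tp\in E_e$; Birkhoff gives $Th=0$, and subtracting the averaging identity $\func{T}{\parent{Tp}\cdot q}=Tp\cdot Tq$ from $(\star)$ gives $\func{T}{h\cdot q}=0$ for every component $q$. Since multiplication by a component equals the associated band projection, taking $q$ to be the carrier component of $h^+$, respectively of $h^-$, yields $Th^+=0$ and $Th^-=0$.

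The main obstacle is precisely this last cancellation: recovering $h=0$ from $\func{T}{h\cdot q}=0$. This is where strict positivity of $T$ is indispensable, turning $Th^+=Th^-=0$ into $h^+=h^-=0$ and hence $L_Sp=Tp$. Without it the equivalence fails: on $E=\R^2$ with $e=\parent{1,1}$, $S$ the identity and $T\parent{x,y}=\parent{x,x}$, condition $(\star)$ reduces to $\func{T}{p\cdot q}=Tp\cdot Tq$, which holds for all components, yet $\range T$ is the diagonal and $L_S=\mathrm{id}\neq T$, so the system is not ergodic. Thus strict positivity of $T$ must be carried as a standing hypothesis here. The remaining care points --- order continuity of multiplication in $E_e$, the identification of multiplication by a component with a band projection, and the passage from components to unbounded invariant elements via truncation and order-closedness of $\range T$ --- are routine once $(\star)$, $(\dagger)$ and this cancellation are in place.
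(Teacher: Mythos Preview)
The paper does not prove this corollary; it is quoted from \cite{khw-1-B} in the preliminaries, so there is no in-paper argument to compare against.

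Your argument is correct and, more than that, uncovers a genuine issue with the statement as recalled: the implication from \eqref{weak mixing section first equation} to ergodicity fails without the hypothesis that $T$ be strictly positive. Your counterexample on $\R^2$ with $T(x,y)=(x,x)$ and $S=\mathrm{id}$ is valid --- every pair of components satisfies $T(p\cdot q)=Tp\cdot Tq$, yet $\mathcal{I}_S=E\not\subset\mathcal{R}(T)$, so the system is not ergodic. The forward direction (ergodic $\Rightarrow$ \eqref{weak mixing section first equation}) goes through exactly as you describe, via $L_Sp\in\mathcal{I}_S$, ergodicity giving $L_Sp\in\mathcal{R}(T)$, Birkhoff giving $T(L_Sp)=Tp$, and then the averaging property; no strict positivity is needed there. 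It is only the converse that requires it, precisely at the cancellation step you isolate, where $T(h\cdot q)=0$ for all components $q$ must force $h=0$.

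Note that Theorem~\ref{product spaces theorem}, the only place in the present paper where the corollary is applied, already carries strict positivity of $T$ (and of $T_1$) as a standing hypothesis, so the applications here are unaffected; but you are right that the recalled corollary is, as stated, missing that hypothesis.
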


In \cite{ergodic book}, a subset $N$ of $\N_0$ is said to be of density zero if
	\[
	\frac{1}{n}
	\sum_{k=0}^{n-1}
	\func{{\chi}_N}{k}
	\to
	0
	\]
as $n \to \infty$, where $\displaystyle \func{{\chi}_N}{k}=0$ if $k \in \2N_0 \setminus N$ and $\displaystyle \func{{\chi}_N}{k}=1$ if $k \in N$. 

The Koopman-von Neumann Lemma \cite[Lemma 6.2]{ergodic book} asserts that if a sequence $(f_n)$ of real numbers is non-negative and bounded, then $\displaystyle \frac{1}{n} \sum_{k=0}^{n-1} f_k \to 0$ as $n \to \infty$ if and only if there is $N$, a subset of $\2N_0$ of density zero, such that $f_n \to 0$ as $n \to \infty$, for $n \in \2N_0 \setminus N$.
This was extended to the Riesz space context in \cite{khw-1-A} as follows.

\begin{definition}\label{subset of density zero definition}
Let $E$ be a Dedekind complete Riesz space with weak order unit $e$.
A sequence $\displaystyle {\parent{p_n}}$ of components of $e$ is said to be of density zero if
	\[
	\frac{1}{n}
	\sum_{k=0}^{n-1}
	p_k
	\to 0,
	\]
	in order, as $n\to\infty$.
\end{definition}

If $E$ is a Dedekind complete Riesz space with weak order unit $e$ and $E^u$ is the universal completion of $E$ then $E^u$ and the ideal generated by $e$, $E_e$ are $f$-algebras. With the multiplicative structure of $E^u$ we observe that $E$ is an $E_e$-module, i.e. elements of $E$ can be multiplied by elements of $E_e$ to give back an element of $E$. In particular if $P$ is a band projection on $E$ and $p=Pe$ then $pf=Pf$ for all $f\in E$. The Koopman-von Neumann Lemma in Riesz spaces of \cite{khw-1-A} can now be written as follows.

\begin{theorem}[Koopman-von Neumann] \label{koopman-von neumann lemma}
Let $E$ be a Dedekind complete Riesz space with weak order unit and let
 $(f_n)$ be an order bounded sequence in the positive cone $E_+$, of $E$, then
$\displaystyle{\frac{1}{n} \sum_{k=0}^{n-1} f_k\to 0},$ 
in order, as $n\to\infty$, if and only if there exists a density zero sequence $\parent{p_n}$ of components of $e$ such that 
$\parent{e-p_n} f_n \to 0,$ in order, as $n\to\infty$.
\end{theorem}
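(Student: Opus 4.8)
The plan is to transcribe the classical proof of the Koopman--von Neumann lemma into Riesz space language, with Freudenthal's Spectral Theorem playing the role of super-level sets.

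For the implication ``$\Leftarrow$'' I would argue directly. Suppose $(p_n)$ is a density zero sequence of components of $e$ with $(e-p_n)f_n\to 0$ in order, and fix $g\in E_+$ with $f_n\le g$ for all $n$. Writing each band projection as multiplication by its component gives $p_nf_n\le p_ng$, so in $E$
\[
0\ \le\ \frac1n\sum_{k=0}^{n-1}f_k\ \le\ g\cdot\Big(\frac1n\sum_{k=0}^{n-1}p_k\Big)\ +\ \frac1n\sum_{k=0}^{n-1}(e-p_k)f_k .
\]
The second term is the Cesàro average of the non-negative order-null sequence $\big((e-p_k)f_k\big)_k$, hence tends to $0$ in order by the consequence of \cite{KRodW2} recalled at the start of Section~\ref{preliminaries}; the first term is the fixed positive element $g$ times the order-null sequence $\big(\frac1n\sum_{k<n}p_k\big)_n$, which lies in $E_e$, hence also tends to $0$ in order. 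A squeeze yields $\frac1n\sum_{k<n}f_k\to 0$ in order.

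For ``$\Rightarrow$'' assume $\frac1n\sum_{k<n}f_k\to 0$ in order. For $m\in\N$, let $P^{(m)}_n$ be the band projection onto the band generated by $\big(f_n-\tfrac1m e\big)^+$ and set $p^{(m)}_n:=P^{(m)}_ne$, the ``spectral component $[\,f_n\ge\tfrac1m\,]$''. Using $Ph=(Pe)h$ for band projections together with the disjointness of $\big(f_n-\tfrac1m e\big)^+$ and $\big(f_n-\tfrac1m e\big)^-$, one checks: (a) $(e-p^{(m)}_n)f_n\le\tfrac1m e$; (b) $\tfrac1m\,p^{(m)}_n\le p^{(m)}_nf_n\le f_n$, so $p^{(m)}_n\le m f_n$; (c) $p^{(m)}_n\le p^{(m+1)}_n$. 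By (b), $0\le\frac1n\sum_{k<n}p^{(m)}_k\le m\cdot\frac1n\sum_{k<n}f_k\to0$ in order, so each $\big(p^{(m)}_n\big)_n$ is a density zero sequence. I would then choose an increasing sequence $\ell_1<\ell_2<\cdots$ and set $p_n:=p^{(m)}_n$ for $\ell_m\le n<\ell_{m+1}$ (with $p_n:=0$ for $n<\ell_1$). By (a), $(e-p_n)f_n\le\tfrac1m e$ whenever $n\ge\ell_m$, so $(e-p_n)f_n\to 0$ in order whatever the $\ell_m$ are; and by (c), for $\ell_m\le n<\ell_{m+1}$,
\[
0\ \le\ \frac1n\sum_{k=0}^{n-1}p_k\ \le\ \frac{\ell_1}{n}\,e\ +\ \frac1n\sum_{k=0}^{n-1}p^{(m)}_k .
\]

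The main obstacle is the choice of the block boundaries $\ell_m$: a diagonalisation across the countably many density zero sequences $v^{(m)}:=\big(\frac1n\sum_{k<n}p^{(m)}_k\big)_n$. Putting $w^{(m)}_n:=\sup_{k\ge n}v^{(m)}_k$ (so $0\le w^{(m)}_n\le e$ and $w^{(m)}_n\downarrow0$ as $n\to\infty$), what is required is an increasing $(\ell_m)$ for which the diagonal sequence $\big(w^{(m)}_{\ell_m}\big)_m$ is order-null: taking $\limsup_n$ in the last display along the blocks then gives $\frac1n\sum_{k<n}p_k\to0$ in order, which finishes the proof. The subtle point, with no classical analogue, is that in a general Dedekind complete Riesz space an arbitrary order-null sequence need not be made ``uniformly small'' by a scalar cut-off, so the selection of the $\ell_m$ must genuinely use that $w^{(m)}_n\downarrow 0$; I would attempt this by choosing the $\ell_m$ to grow fast enough that each $w^{(m)}_{\ell_m}$ is absorbed into the tail of the single order-null majorant $W_n:=\sum_{m\ge1}2^{-m}w^{(m)}_n$ (which satisfies $W_n\downarrow0$, $W_n\le e$ and $w^{(m)}_n\le 2^mW_n$), or, when $E$ is concretely an $L^\infty$-space, via an Egorov argument reducing to the scalar Koopman-von Neumann lemma. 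Everything else --- the ``$\Leftarrow$'' direction, the verifications (a)--(c), and the block estimate --- is routine.
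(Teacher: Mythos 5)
A preliminary remark: this paper does not prove Theorem~\ref{koopman-von neumann lemma} at all --- it is recalled verbatim from \cite{khw-1-A} --- so there is no in-paper proof to compare against and I assess your argument on its own terms. Your ``$\Leftarrow$'' direction is correct and complete: the splitting $f_k\le p_kg+(e-p_k)f_k$ with $g$ an order bound, the Ces\`aro fact from \cite{KRodW2} for the second term, and order continuity of multiplication by the fixed element $g$ for the first term are exactly what is needed.

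The ``$\Rightarrow$'' direction has a genuine gap, and it is precisely the step you flag. Your properties (a)--(c) of the spectral components $p_n^{(m)}$ are fine, and each $\bigl(p_n^{(m)}\bigr)_n$ is indeed density zero. But the blockwise construction stands or falls with producing $\ell_m\uparrow\infty$ such that $\bigl(w^{(m)}_{\ell_m}\bigr)_m$ is order null, and neither of your devices achieves this. The majorant $W_n=\sum_m 2^{-m}w^{(m)}_n$ only yields $w^{(m)}_{\ell_m}\le 2^mW_{\ell_m}$, and since $W_n\downarrow 0$ carries no rate in a general Dedekind complete Riesz space, no choice of $\ell_m$ defeats the factor $2^m$; the Egorov reduction covers only concrete $L^\infty$ situations. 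Worse, the abstract diagonalisation principle you are invoking (given $w^{(m)}_n\downarrow_n 0$ for each $m$, find $\ell_m\uparrow\infty$ with $w^{(m)}_{\ell_m}\to 0$ in order) is false in general: in $E=\R^{[0,1]}$ with pointwise order, where order convergence of order bounded sequences is pointwise convergence, index the continuum of increasing integer sequences $(\ell^x_m)_m$ by points $x\in[0,1]$ and set $w^{(m)}_n(x)=1$ for $n\le\ell^x_m$ and $0$ otherwise; every candidate $(\ell_m)$ is defeated at the point $x$ with $\ell^x=\ell$. So the proof must exploit more than density zero of each level sequence separately.

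The standard repair, which removes the diagonal entirely, is to cut once at an order-null threshold built from the hypothesis rather than at the scalar levels $\tfrac1m e$. Put $h_n:=\frac1n\sum_{k<n}f_k$, $g_n:=\sup_{k\ge n}h_k\downarrow 0$, and let $p_n$ be the component of $e$ generating the band of $\bigl(f_n-g_n^{1/2}\bigr)^+$, the square root taken in the $f$-algebra $E^u$. Then $(e-p_n)f_n\le g_n^{1/2}\downarrow 0$ exactly as in your (a), while your (b) becomes $p_kg_n^{1/2}\le p_kg_k^{1/2}\le p_kf_k\le f_k$ for $k<n$, whence $g_n^{1/2}\cdot\frac1n\sum_{k<n}p_k\le h_n\le g_n=g_n^{1/2}\cdot g_n^{1/2}$; cancelling $g_n^{1/2}$ on the band it generates (outside which $h_n$, hence every $f_k$ and every $p_k$ with $k<n$, vanishes, using semiprimeness of $E^u$) gives $\frac1n\sum_{k<n}p_k\le g_n^{1/2}\to 0$. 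I recommend reworking the forward direction along these lines, or else supplying a proof of the diagonal selection that genuinely uses the bound $v^{(m)}_n\le m\,h_n$ by a single order-null sequence; as written, the argument does not close.
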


We recall the following characterisation theorem from \cite{khw-1-A}.

\begin{theorem} \label{weak mixing big theorem}
Given the conditional expectation preserving system $\parent{E,T,S,e}$, then the following statements are equivalent.
\begin{enumerate}
\item
\label{weak mixing big theorem 1}
$\parent{E,T,S,e}$ is conditionally weak mixing.
\item
\label{weak mixing big theorem 2}
\[
\frac{1}{n} \sum_{k=0}^{n-1} \abs{\func{T}{\parent{S^kf} \cdot g}-Tf \cdot Tg} \rightarrow 0,
\]
in order, as $n \rightarrow \infty$, for all $f,g \in E_e$.
\item
\label{weak mixing big theorem 3}
Given components $p$ and $q$ of $e$, there is a sequence of density zero components, $\displaystyle {\parent{r_n}},$ of $e$, such that
\[
\parent{e-r_n}
\abs{\func{T}{\parent{S^np}\cdot q}-Tp \cdot Tq}
\rightarrow 0,
\]
in order, as $n \rightarrow \infty$.
\end{enumerate}
\end{theorem}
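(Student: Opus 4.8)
The plan is to establish the two equivalences \ref{weak mixing big theorem 1} $\Leftrightarrow$ \ref{weak mixing big theorem 2} and \ref{weak mixing big theorem 1} $\Leftrightarrow$ \ref{weak mixing big theorem 3} separately. The implication \ref{weak mixing big theorem 2} $\Rightarrow$ \ref{weak mixing big theorem 1} is immediate: every component $p$ of $e$ lies in $E_e$, so specialising \ref{weak mixing big theorem 2} to $f=p$, $g=q$ recovers the defining averaging condition of conditional weak mixing. Likewise \ref{weak mixing big theorem 1} $\Leftrightarrow$ \ref{weak mixing big theorem 3} will be a direct application of the Riesz space Koopman--von Neumann Lemma (Theorem \ref{koopman-von neumann lemma}). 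Thus the substantive content is the implication \ref{weak mixing big theorem 1} $\Rightarrow$ \ref{weak mixing big theorem 2}, which upgrades the averaging condition from components of $e$ to arbitrary elements of $E_e$.

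For \ref{weak mixing big theorem 1} $\Rightarrow$ \ref{weak mixing big theorem 2}, the key structural observation is that, for each fixed $k$, the map $\parent{f,g}\mapsto\func{T}{\parent{S^kf}\cdot g}-Tf\cdot Tg$ is bilinear, since $T$ is linear, $S^k$ is a Riesz homomorphism (hence linear), and the $f$-algebra multiplication on $E_e$ is bilinear. Writing $e$-step functions $f=\sum_i a_ip_i$ and $g=\sum_j b_jq_j$ with the $p_i,q_j$ components of $e$, the triangle inequality in $E_e$ yields $\abs{\func{T}{\parent{S^kf}\cdot g}-Tf\cdot Tg}\le\sum_{i,j}\abs{a_i}\abs{b_j}\,\abs{\func{T}{\parent{S^kp_i}\cdot q_j}-Tp_i\cdot Tq_j}$. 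Averaging over $k$ then dominates the Ces\`aro average for $\parent{f,g}$ by a finite nonnegative combination of the Ces\`aro averages for the pairs $\parent{p_i,q_j}$, each of which tends to $0$ in order by \ref{weak mixing big theorem 1}; hence \ref{weak mixing big theorem 2} holds for all $e$-step functions.

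To reach arbitrary $f,g\in E_e$ I would use Freudenthal's Spectral Theorem \cite[Theorem 33.2]{zaanen} to pick, for each $\epsilon>0$, $e$-step functions $f_m,g_m$ with $\abs{f-f_m}\le\epsilon e$ and $\abs{g-g_m}\le\epsilon e$, and control the error by the uniform-in-$k$ estimates: if $\abs{u}\le\epsilon e$ then $\abs{\func{T}{\parent{S^ku}\cdot v}-Tu\cdot Tv}\le 2\epsilon T\abs{v}$, and if $\abs{w}\le\epsilon e$ then $\abs{\func{T}{\parent{S^ku}\cdot w}-Tu\cdot Tw}\le 2\epsilon T\abs{u}$. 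These follow from $\abs{Tx}\le T\abs{x}$, $\abs{S^kx}=S^k\abs{x}$, $S^ke=e$ and $TS^k=T$. Expanding the bilinear form for $\parent{f,g}$ minus that for $\parent{f_m,g_m}$ into an $f-f_m$ term and a $g-g_m$ term, these estimates bound the difference of the two summands by $c\epsilon e$ uniformly in $k$, for a constant $c$ depending only on the $e$-bounds of $f$ and $g$. Averaging gives $\frac{1}{n}\sum_{k=0}^{n-1}\abs{\func{T}{\parent{S^kf}\cdot g}-Tf\cdot Tg}\le\frac{1}{n}\sum_{k=0}^{n-1}\abs{\func{T}{\parent{S^kf_m}\cdot g_m}-Tf_m\cdot Tg_m}+c\epsilon e$. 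Since all these averages are order bounded by $e$ and $E$ is Dedekind complete, I would pass to the order-$\limsup$ in $n$: the first term on the right vanishes by the step-function case, leaving order-$\limsup$ at most $c\epsilon e$, and letting $\epsilon\downarrow 0$ forces it to be $0$. As the averages are nonnegative, this is the required order convergence to $0$.

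For \ref{weak mixing big theorem 1} $\Leftrightarrow$ \ref{weak mixing big theorem 3}, fix components $p,q$ and set $f_n:=\abs{\func{T}{\parent{S^np}\cdot q}-Tp\cdot Tq}$. As $S^np$ is again a component of $e$, so is $\parent{S^np}\cdot q=\parent{S^np}\wedge q$, whence that term lies in $[0,e]$; since $0\le Tp,Tq\le e$ also $Tp\cdot Tq\in[0,e]$, so $\parent{f_n}$ is a sequence in $E_+$ order bounded by $e$. Statement \ref{weak mixing big theorem 1} for this $p,q$ says exactly that $\frac{1}{n}\sum_{k=0}^{n-1}f_k\to 0$ in order, and statement \ref{weak mixing big theorem 3} says exactly that there is a density zero sequence of components $\parent{r_n}$ with $\parent{e-r_n}f_n\to 0$ in order; these are equivalent by Theorem \ref{koopman-von neumann lemma}, and quantifying over all $p,q$ completes the equivalence. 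The main obstacle is the order-analytic bookkeeping in the step-function-to-$E_e$ passage: one cannot simply take limits as in the scalar case, and the argument must be arranged so that the $\epsilon$-error $c\epsilon e$ and the $n\to\infty$ averaging interact correctly, which is why passing to the order-$\limsup$ before letting $\epsilon\downarrow 0$ (legitimate by Dedekind completeness and the uniform bound $e$) is the natural device.
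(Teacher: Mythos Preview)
The paper does not prove this theorem; it is explicitly recalled from \cite{khw-1-A} (see the sentence preceding the statement), so there is no in-paper proof to compare against.

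Your proposal is correct and follows the natural route. The equivalence \ref{weak mixing big theorem 1}~$\Leftrightarrow$~\ref{weak mixing big theorem 3} is, as you say, an immediate application of the Riesz space Koopman--von Neumann Lemma once one checks that $f_n:=\abs{\func{T}{\parent{S^np}\cdot q}-Tp\cdot Tq}$ is an order bounded sequence in $E_+$; your verification of this (using that $S$ sends components of $e$ to components of $e$) is fine. The implication \ref{weak mixing big theorem 2}~$\Rightarrow$~\ref{weak mixing big theorem 1} is trivial. For \ref{weak mixing big theorem 1}~$\Rightarrow$~\ref{weak mixing big theorem 2}, the two-stage argument---bilinearity of $(f,g)\mapsto\func{T}{(S^kf)\cdot g}-Tf\cdot Tg$ to reach $e$-step functions, then Freudenthal approximation with the uniform-in-$k$ bound $\abs{\func{T}{(S^ku)\cdot v}-Tu\cdot Tv}\le 2\epsilon\,T\abs{v}$ when $\abs{u}\le\epsilon e$ (and the symmetric estimate)---is sound, and your use of the order-$\limsup$ to decouple the $n\to\infty$ and $\epsilon\downarrow 0$ limits is the appropriate device in a Dedekind complete Riesz space. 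This is essentially the argument given in \cite{khw-1-A}.
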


We recall the following theorem from \cite{khw-1-B}.
	
\begin{theorem} \label{weak implies ergodic}
If a  conditional expectation preserving system is conditionally weak mixing then it is ergodic.
\end{theorem}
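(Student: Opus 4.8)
The plan is to deduce ergodicity directly from the two Ces\`aro characterisations already in hand: conditional weak mixing in the form \eqref{weak mixing}, and the ergodicity criterion of Corollary~\ref{corollary for mixing}. Both are statements quantified over components $p,q$ of $e$, and they differ only in that conditional weak mixing averages the absolute values $\abs{\func{T}{\parent{S^kp}\cdot q}-Tp\cdot Tq}$, while the ergodicity criterion averages the signed quantities $\func{T}{\parent{S^kp}\cdot q}-Tp\cdot Tq$. So the whole argument is a one-sided-to-two-sided comparison.

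First I would fix components $p$ and $q$ of $e$ and set $g_k:=\func{T}{\parent{S^kp}\cdot q}-Tp\cdot Tq\in E$. Since $-\abs{g_k}\le g_k\le\abs{g_k}$ in $E$, summing over $k=0,\dots,n-1$ and dividing by $n$ (positive operations in the Riesz space $E$, so the inequalities are preserved) gives
\[
-\frac1n\sum_{k=0}^{n-1}\abs{g_k}\ \le\ \frac1n\sum_{k=0}^{n-1}g_k\ \le\ \frac1n\sum_{k=0}^{n-1}\abs{g_k}.
\]
Conditional weak mixing, via Theorem~\ref{weak mixing big theorem} (equivalently \eqref{weak mixing}), is exactly the assertion that the outer terms converge to $0$ in order as $n\to\infty$. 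Order convergence is stable under this two-sided squeeze (two order-null sequences bounding a sequence above and below force that sequence to be order-null, since $\abs{\frac1n\sum g_k}\le 2\cdot\frac1n\sum\abs{g_k}$ here), so the middle term converges to $0$ in order as well.

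Consequently $\frac1n\sum_{k=0}^{n-1}\func{T}{\parent{S^kp}\cdot q}\to Tp\cdot Tq$ in order for every pair of components $p,q$ of $e$, which is precisely condition \eqref{weak mixing section first equation} of Corollary~\ref{corollary for mixing}; that corollary then yields that $\parent{E,T,S,e}$ is ergodic. I do not anticipate a genuine obstacle: the content is entirely packaged in the cited characterisations, and the only point needing (routine) care is the stability of order convergence under the squeeze together with the order-preservation of the averaging map $(g_k)\mapsto\frac1n\sum_{k=0}^{n-1}g_k$ on $E$.
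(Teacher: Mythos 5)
Your argument is correct and is the natural (and intended) one: the paper itself only recalls this theorem from \cite{khw-1-B}, where it is obtained by exactly this comparison of the signed Ces\`aro averages in \eqref{weak mixing section first equation} with the absolute-value averages in \eqref{weak mixing}, using $\bigl|\frac1n\sum_{k=0}^{n-1}g_k\bigr|\le\frac1n\sum_{k=0}^{n-1}\abs{g_k}$ and the stability of order convergence under domination. No gap; the factor $2$ in your squeeze remark is not even needed.
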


\section{Components in the tensor product}

For the general construction of the tensor product of Riesz spaces and $f$-algebras we refer the reader to \cite{azouzi2, buskes, fremlin tensor paper, labuschagne tensor paper, van gaans, van waaij}.
 As shown in \cite{labuschagne tensor paper}, if $E$ and $F$ are Archimedean Riesz spaces then a partial ordering can be induced on $E \otimes F$ by the cone generated by $E_+ \otimes F_+$. Further, from the multilinearity of the tensor product, we have that if $0 \geq f \in E$ and $0 \geq g \in F$ then $f \otimes g \geq 0$ in $E \otimes F$. 

Let $E$ and $F$ be a Dedekind complete Riesz spaces with weak order units $e$ and $f$ respectively.
Denote by $\overline{E \overline{\otimes} F}^{\delta}$ the Dedekind completion of the Fremlin tensor product, $E \overline{\otimes} F$, of $E$ and $F$.
In the recent work of J.J. Gobler \cite{grobler} on the Dedekind completion of the Fremlin tensor product of Dedekind complete Riesz spaces with weak order units it was shown that the tensor product of conditional expectation operators yields a conditional expectation operator. 
Here if $e$ and $f$ are a weak order units of $E$ and $F$ respectively then $e \otimes f$ is a weak order unit of $E \overline{\otimes} F$ which implies $e \otimes f$ is a weak order unit of $\dedotimes{E}{F}$
as $E \overline{\otimes} F$ is order dense in  $\dedotimes{E}{F}$.
In particular if $T_E$ and $T_F$ are conditional expectation operators on $E$ and $F$ respectively with $T_Ee=e$ and $T_Ff=f$ then $T_E\otimes_\delta T_F$ is a conditional expectation operator on $\dedotimes{E}{F}$ with $\dedotimes{E}{F}(e\otimes f)=e\otimes f$ and if $T_E$ and $T_F$ are strictly positive then so is $T_E\otimes_\delta T_F$. Further Grobler showed that the map $(f,g)\mapsto f\otimes_\delta g$ from $E\times F \to \dedotimes{E}{F}$ is order continuous. 

\begin{theorem}\label{density-components}
Let $E$ and $F$ be a Dedekind complete Riesz spaces with weak order units $e$ and $f$ respectively.
\begin{itemize}
\item[(a)]
If $p\in C_E(e)$ and $q\in C_F(f)$ then $p\otimes q\in C_{\dedotimes{E}{F}}(e\otimes f)$. 
\item[(b)]
If $u\in C_{\dedotimes{E}{F}}(e\otimes f)$ then
 $$u=\sup\{p\otimes q\,|\, p\otimes q, p\in C_E(e), q\in C_F(f)\}.$$
\end{itemize}
 \end{theorem}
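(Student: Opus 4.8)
\emph{Part (a).} I would work inside the Archimedean $f$-algebra $\big(\dedotimes{E}{F}\big)_{e\otimes f}$, which has multiplicative unit $e\otimes f$ and whose multiplication satisfies $(a\otimes_\delta b)(c\otimes_\delta d)=(a\cdot c)\otimes_\delta(b\cdot d)$ for $a,c\in E_e$, $b,d\in F_f$. For $p\in C_E(e)$ and $q\in C_F(f)$ this gives $(p\otimes q)(p\otimes q)=p^2\otimes q^2=p\otimes q$, so $p\otimes q$ is an idempotent with $0\le p\otimes q\le e\otimes f$. Since any idempotent $u$ in an Archimedean $f$-algebra with unit satisfies $u(e\otimes f-u)=u-u^2=0$, and $f$-algebras with unit are semiprime, we get $u\wedge(e\otimes f-u)=0$, i.e.\ $u$ is a component of the unit; hence $p\otimes q\in C_{\dedotimes{E}{F}}(e\otimes f)$. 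Two by-products I would record for later: $(p\otimes q)\wedge(p'\otimes q')=(p\wedge p')\otimes(q\wedge q')$, so finite meets of ``rectangle components'' $p\otimes q$ are again rectangle components; and if $p\perp p'$ in $E$ (or $q\perp q'$ in $F$) then $p\otimes q\perp p'\otimes q'$.

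\emph{Part (b), first two steps.} Write $\langle z\rangle$ for the component of $e\otimes f$ generated by $z\in\big(\dedotimes{E}{F}\big)_+$. \textbf{Step 1:} for $g\in E_+$, $h\in F_+$ one has $\langle g\otimes h\rangle=\langle g\rangle\otimes\langle h\rangle$, a rectangle component by part~(a). Indeed $g\otimes h=(\langle g\rangle g)\otimes(\langle h\rangle h)=(\langle g\rangle\otimes\langle h\rangle)(g\otimes h)$ puts $g\otimes h$ in the band of $\langle g\rangle\otimes\langle h\rangle$; conversely $\langle g\rangle=\sup_n(ng\wedge e)$ and $\langle h\rangle=\sup_m(mh\wedge f)$, so by order continuity of $\otimes_\delta$ (Grobler) $\langle g\rangle\otimes\langle h\rangle=\sup_{n,m}(ng\wedge e)\otimes(mh\wedge f)\le\sup_{n,m}\big(nm(g\otimes h)\wedge(e\otimes f)\big)=\langle g\otimes h\rangle$. \textbf{Step 2} (the core): for every $z\in(E\overline\otimes F)_+$ the component $\langle z\rangle$ is a supremum of rectangle components, each $\le\langle z\rangle$.

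\emph{Part (b), conclusion from Step 2.} Let $u\in C_{\dedotimes{E}{F}}(e\otimes f)$. Order density of $E\overline\otimes F$ in $\dedotimes{E}{F}$ gives $u=\sup\{z\in(E\overline\otimes F)_+:z\le u\}$; since $0\le z\le u\le e\otimes f$ we have $z\le\langle z\rangle\le u$ (the last inequality because $u$ is a component and $z\le u$), so $u=\sup\{\langle z\rangle:z\in(E\overline\otimes F)_+,\ z\le u\}$. By Step~2 each $\langle z\rangle$ is a supremum of rectangle components that are $\le\langle z\rangle\le u$, and therefore $u\le\sup\{p\otimes q:p\in C_E(e),\,q\in C_F(f),\,p\otimes q\le u\}\le u$, which is exactly part~(b).

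\emph{The obstacle.} Everything rests on Step~2. Using that $E\overline\otimes F$ is generated, \emph{as a Riesz space}, by the simple tensors, every $z\in E\overline\otimes F$ is a finite lattice--linear expression in simple tensors, which by distributivity of the lattice operations can be put in the normal form $z=\bigvee_i\bigwedge_j\ell_{ij}$ with each $\ell_{ij}$ a linear combination of simple tensors, i.e.\ $\ell_{ij}\in E\otimes F$. For $z\ge0$ one then checks, using $\bigvee_i c_i=\bigvee_i c_i^+$ when $\bigvee_i c_i\ge0$, the identity $\big(\bigwedge_j\ell_{ij}\big)^+=\bigwedge_j\ell_{ij}^+$, the relations $\langle a+b\rangle=\langle a\rangle\vee\langle b\rangle$ and $\langle a\wedge b\rangle=\langle a\rangle\wedge\langle b\rangle$ for $a,b\ge0$, the infinite distributive law, and the fact from part~(a) that finite meets of rectangle components are rectangle components, that Step~2 reduces to the single assertion: \emph{for $x\in E\otimes F$, the component $\langle x^+\rangle$ is a supremum of rectangle components.} This is where the real difficulty lies. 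Writing $x=\sum_{i=1}^n g_i\otimes h_i$, it amounts to covering the ``support'' of $x^+$ by rectangle components; the guiding picture is that on the finitely generated Riesz subspaces spanned by the $g_i$ and by the $h_i$ — represented on a product of compact spaces — $x$ becomes $(\xi,\eta)\mapsto\langle\xi,\eta\rangle$ for $\R^n$-valued maps, and $\{\langle\xi,\eta\rangle>0\}$ is a union of open product sets. Carrying this out in a fully order-theoretic way, i.e.\ controlling $\langle x^+\rangle$ for an elementary tensor sum $x$ without such a representation, is the main obstacle; the rest is bookkeeping.
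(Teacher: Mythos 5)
Your part~(a) is correct but argued differently from the paper: you use the $f$-algebra structure of $\big(\dedotimes{E}{F}\big)_{e\otimes f}$, the multiplicativity $(a\otimes b)(c\otimes d)=(ac)\otimes(bd)$, and the fact that idempotents in a semiprime unital Archimedean $f$-algebra are components of the unit. The paper instead works purely with lattice operations, writing $e\otimes f-p\otimes q=((e-p)\otimes f)\vee(p\otimes(f-q))$ and checking each term is disjoint from $p\otimes q$; that version needs nothing beyond positivity of $\otimes$ on $E_+\times F_+$. Either is acceptable.

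Part~(b), however, contains a genuine gap, and you have located it yourself: your Step~2 --- that $\langle z\rangle$ is a supremum of rectangle components for every $z\in(E\overline{\otimes}F)_+$ --- is essentially the full strength of the theorem, and your reduction to ``$\langle x^+\rangle$ is a supremum of rectangle components for $x\in E\otimes F$'' is left unproved; as you note, the order-theoretic control of $x^+$ for an elementary tensor sum $x$ is precisely where the difficulty sits. The missing idea is that you never need to decompose $\langle z\rangle$ at all: it suffices to know that every strictly positive $h\in E\overline{\otimes}F$ dominates \emph{one} nonzero elementary tensor $x\otimes y$ with $x\in E_+$, $y\in F_+$ (a known property of the Fremlin tensor product, \cite{labuschagne tensor paper}), and to combine this with an exhaustion argument. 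Concretely: set $v=\sup\{p\otimes q : p\otimes q\le u,\ p\in C_E(e),\ q\in C_F(f)\}$; then $v$ is a component of $u$, hence of $e\otimes f$. If $v\ne u$, order density of $E\overline{\otimes}F$ in $\dedotimes{E}{F}$ gives $0<h\le u-v$ with $h\in E\overline{\otimes}F$, hence $0<x\otimes y\le h$ for some nonzero $x\in E_+$, $y\in F_+$. Taking $p,q$ to be the components of $e,f$ generating the bands of $x,y$, a band-inclusion result for tensor products (\cite{amine}) places $p\otimes q$ in the band generated by $u-v$; since $p\otimes q$ and $u-v$ are both components of $e\otimes f$, this forces $0<p\otimes q\le u-v$, so $(p\otimes q)\wedge v=0$ while $p\otimes q\le u$, contradicting the definition of $v$. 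This single-tensor lemma plus maximality replaces your Step~2 and all the normal-form bookkeeping; without it (or some substitute), your argument does not close.
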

 
 \proof 
 {\bf (a)} If $p\in C_E(e)$ and $q\in C_F(f)$ then $0\le p\otimes q\le e\otimes f$ and 
 $$e\otimes f -p\otimes q = (e-p)\otimes f+p\otimes (f-q)=((e-p)\otimes f)\bigvee(p\otimes (f-q))$$
 since $$0\le ((e-p)\otimes f)\bigwedge(p\otimes (f-q))\le ((e-p)\otimes f)\bigwedge(p\otimes f)=0$$ 
as $(e-p)\wedge p=0$.
 Now 
 \begin{eqnarray*}
 (e\otimes f -p\otimes q)\wedge(p\otimes q)&=&
 (((e-p)\otimes f)\bigvee(p\otimes (f-q)))\bigwedge (p\otimes q)\\
 &=&
(((e-p)\otimes f)\wedge (p\otimes q)) \bigvee ((p\otimes (f-q))\wedge(p\otimes q)).
\end{eqnarray*} 
Here
$$0\le ((e-p)\otimes f)\wedge (p\otimes q))\le ((e-p)\otimes f)\wedge (p\otimes f))=((e-p)\wedge p)\otimes f=0$$
and
$$((p\otimes (f-q))\wedge(p\otimes q))=p\otimes ((f-q)\wedge q)=0.$$  
Thus $p\otimes q$ is a component of $e\otimes f$.

{\bf (b)}
Let $u\in C_{\dedotimes{E}{F}}(e \otimes f)$, then $0 \leq u \leq e \otimes f \in \dedotimes{E}{F}$ and $u \wedge \parent{(e \otimes f )- u}=0$.
Now $E_+ \otimes F_+$ is order dense in  $E_+ \overline{\otimes} F_+$ which is order dense in $\dedotimes{E}{F}$, so
\[u=\sup\setbuilder{w \in E_+ \otimes F_+}{w \leq u}.\]

Let 
\[v=\sup\setbuilder{p \otimes q}{p \otimes q \leq u, p\in C_E(e), q\in C_F(f)},\]
where this supremum is taken in $\dedotimes{E}{F}$.
Here $v$ is a supremum of components of $u$ and is thus a component of $u$, giving that 
$0\le v\le u$ and $(u-v)\wedge v=0$. Further as $u$ is a component of $e\otimes f$, so is $v$.
It remains to show that $u=v$. Assume $v\ne u$, then $0 < u-v \in \dedotimes{E}{F}$.
By the order density of $E\overline{\otimes}F$ in $\dedotimes{E}{F}$, there is $0<h\le u-v$ with $h\in E\overline{\otimes}F$. From \cite{labuschagne tensor paper} there are $x\in E_+$, $y\in F_+$ with $x\ne 0$ and $y\ne 0$ such that $x\otimes y\le h$.
Let $p$ and $q$ denote the components of $e$ and $f$ respectively which generate the bands in $E$ and $F$ generated by $x$ and $y$ respectively.

Let $B_x(E)$ denote the band in $E$ generated by $x$, then
 $B_x(E)=B_p(E)$ and similarly $B_y(F)=B_q(F)$. From \cite{amine},
\[ p\otimes q\in
{B_p(E)}\overline{\otimes}{B_q(F)}={B_x(E)}\overline{\otimes}{B_y(F)}
\subset
\dedclosure{B_{x\otimes y}(E\overline{\otimes} F)}
\subset
B_{u-v}(\dedotimes{E}{F}).
\]
Thus $p\otimes q\in B_{u-v}(\dedotimes{E}{F})$ and is a component of $e\otimes f$, but $u-v$ is also a component of  $e\otimes f$, giving
$$0<p\otimes q\le u-v.$$
Hence $(p\otimes q)\wedge v=0$ making $$v<v \vee (p\otimes q)\le u$$
which contradicts the definition of $v$, so $u=v$.
\qed

\begin{proposition}\label{ceps-tensor}
If $(E_i,T_i,S_i,e_i), i=1,2,$ are conditional epxectation preserving system then
 $(\dedotimes{E_1}{E_2},T_1\otimes_\delta T_2, S_1\otimes_\delta S_2, e_1\otimes e_2)$
 is a conditional expectation preserving system.
 \end{proposition}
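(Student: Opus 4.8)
The plan is to verify, one by one, the clauses in the definition of a conditional expectation preserving system for the quadruple $\parent{\dedotimes{E_1}{E_2},T_1\otimes_\delta T_2,S_1\otimes_\delta S_2,e_1\otimes e_2}$. Three of the four requirements are, in effect, already recorded in Section~3: $\dedotimes{E_1}{E_2}$ is Dedekind complete (being a Dedekind completion) and $e_1\otimes e_2$ is a weak order unit of it, while by Grobler's results $T_1\otimes_\delta T_2$ is a conditional expectation operator on $\dedotimes{E_1}{E_2}$ with $\parent{T_1\otimes_\delta T_2}\parent{e_1\otimes e_2}=e_1\otimes e_2$, which is strictly positive when $T_1$ and $T_2$ are. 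For the map $S_1\otimes_\delta S_2$ I would first note that, by the universal property of the Fremlin tensor product together with the theory of tensor products of Riesz homomorphisms in the cited references, $\parent{a,b}\mapsto S_1a\otimes S_2b$ induces a Riesz homomorphism $S_1\overline\otimes S_2$ on $E_1\overline\otimes E_2$ which extends to a Riesz homomorphism $S_1\otimes_\delta S_2$ on $\dedotimes{E_1}{E_2}$ acting on simple tensors by $\parent{S_1\otimes_\delta S_2}\parent{a\otimes b}=S_1a\otimes S_2b$; applied to $e_1\otimes e_2$ this gives $\parent{S_1\otimes_\delta S_2}\parent{e_1\otimes e_2}=S_1e_1\otimes S_2e_2=e_1\otimes e_2$.

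The real content is the conditional-expectation-preserving identity. By Freudenthal's Spectral Theorem it is enough to prove
\[
\parent{T_1\otimes_\delta T_2}\parent{S_1\otimes_\delta S_2}u=\parent{T_1\otimes_\delta T_2}u
\qquad\text{for all } u\in C_{\dedotimes{E_1}{E_2}}\parent{e_1\otimes e_2},
\]
and the strategy is to establish this first for simple tensors and then to bootstrap to a general component using Theorem~\ref{density-components}. For the elementary case: if $p\in C_{E_1}(e_1)$ and $q\in C_{E_2}(e_2)$ then, since a Riesz homomorphism fixing the weak order unit carries components of that unit to components of it, $S_1p\in C_{E_1}(e_1)$ and $S_2q\in C_{E_2}(e_2)$, and a one-line computation combining $\parent{S_1\otimes_\delta S_2}\parent{p\otimes q}=S_1p\otimes S_2q$, $\parent{T_1\otimes_\delta T_2}\parent{x\otimes y}=T_1x\otimes T_2y$ and the hypothesis $T_iS_i=T_i$ on $C_{E_i}(e_i)$ gives the identity for $u=p\otimes q$. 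Linearity of $T_1\otimes_\delta T_2$ and of $S_1\otimes_\delta S_2$ then propagates it to every finite sum $\sum_{i=1}^m a_i\otimes b_i$ of simple-tensor components, and hence, after rewriting a finite supremum of simple-tensor components as a finite sum of pairwise disjoint ones (the disjointification one performs on a finite union of measurable rectangles, which rests on the $f$-algebra structure of the tensor product from the cited works), to every finite supremum of simple-tensor components.

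To reach an arbitrary component $u$ I would invoke Theorem~\ref{density-components}(b), which presents $u$ as the supremum of the simple-tensor components lying below it; closing that set under finite suprema produces an upward-directed family $\mathcal F$ with $\sup\mathcal F=u$, each member of which satisfies the identity just established. Order continuity of $T_1\otimes_\delta T_2$ then gives $\parent{T_1\otimes_\delta T_2}u=\sup_{w\in\mathcal F}\parent{T_1\otimes_\delta T_2}w$, and positivity (hence monotonicity) of the two operators yields $\parent{T_1\otimes_\delta T_2}\parent{S_1\otimes_\delta S_2}u\ge\parent{T_1\otimes_\delta T_2}u$. The reverse inequality I would obtain either directly from order continuity of $S_1\otimes_\delta S_2$, or, to sidestep that, by running exactly the same estimate on the complementary component $\parent{e_1\otimes e_2}-u$ and using that $T_1\otimes_\delta T_2$ and $S_1\otimes_\delta S_2$ both fix $e_1\otimes e_2$; the two inequalities together give equality, completing the verification.

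I expect the bootstrap in the last paragraph to be the main obstacle. Theorem~\ref{density-components}(b) only supplies approximation in the lattice sense, whereas $T_1\otimes_\delta T_2$ is not a lattice homomorphism, so it cannot be pushed through the supremum directly; the disjointification step is what brings the approximants into the linear span where linearity of $T_1\otimes_\delta T_2$ can be used, and the complement argument is what lets one avoid assuming order continuity of the non-lattice-homomorphism part $S_1\otimes_\delta S_2$. A subsidiary point that also needs to be handled with care is the bare existence of $S_1\otimes_\delta S_2$ as a Riesz homomorphism on the Dedekind completion that fixes $e_1\otimes e_2$, for which I would lean on Grobler's work and the tensor-product references cited in Section~3.
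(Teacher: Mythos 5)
Your proposal is correct, and its skeleton coincides with the paper's: the Dedekind completeness, the weak order unit, and the properties of $T_1\otimes_\delta T_2$ and $S_1\otimes_\delta S_2$ are all drawn from Grobler's construction, the identity $(T_1\otimes_\delta T_2)(S_1\otimes_\delta S_2)(p\otimes q)=(T_1\otimes_\delta T_2)(p\otimes q)$ is verified on simple-tensor components by the same one-line computation, and the passage to general components rests on Theorem~\ref{density-components}(b). Where you genuinely diverge is in that final passage. The paper simply asserts that both $T_1\otimes_\delta T_2$ and the composite $(T_1\otimes_\delta T_2)(S_1\otimes_\delta S_2)$ are order continuous and concludes by order density; this is quick, and it tacitly requires order continuity of $S_1\otimes_\delta S_2$, which the definition of a conditional expectation preserving system does not impose on the Riesz homomorphism. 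Your route --- linearity to finite sums, disjointification of finite suprema of ``rectangles'' into disjoint rectangles, then the one-sided inequality $(T_1\otimes_\delta T_2)(S_1\otimes_\delta S_2)u\ge(T_1\otimes_\delta T_2)u$ from positivity and order continuity of $T_1\otimes_\delta T_2$ alone, closed off by applying the same inequality to the complementary component $(e_1\otimes e_2)-u$ and adding --- needs only properties that the hypotheses actually guarantee, at the cost of the extra combinatorial step of disjointifying. What the paper's argument buys is brevity (and it is harmless in context, since the $\otimes_\delta$ extensions in Grobler's framework are built by order continuous extension); what yours buys is independence from any order continuity assumption on $S_1\otimes_\delta S_2$, which makes it the more robust of the two.
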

 
 \proof
 From \cite{grobler}, we have that 
 $\dedotimes{E_1}{E_2}$ is a Dedekind complete Riesz space, that $e_1\otimes e_2$ is a weak order unit for
 $\dedotimes{E_1}{E_2}$ and that $T_1\otimes_\delta T_2$ is a conditional expectation on $\dedotimes{E_1}{E_2}$
 which is an extension of the map $(T_1\otimes T_2)(f\otimes g):=(T_1f)\otimes (T_2g)$, thus
 $(T_1\otimes_\delta  T_2)(e_1\otimes e_2)=e_1\otimes e_2$. That $S_1\otimes_\delta S_2$ is a Riesz homomorphism on 
 $\dedotimes{E_1}{E_2}$ follows directly, as does the property $(S_1\otimes_\delta  S_2)(e_1\otimes e_2)=e_1\otimes e_2$.
 Finally, 
 \begin{align*}
 (T_1\otimes_\delta T_2)(S_1\otimes_\delta S_2)(p_1\otimes p_2)&=(T_1S_1p_1)\otimes (T_2S_2p_2)\\ &=(T_1p_1)\otimes (T_2p_2)=(T_1\otimes_\delta T_2)(p_1\otimes p_2)
 \end{align*}
 for all $p_i\in C_{e_i}(E_i), i=1,2.$ By Proposition \ref{density-components} $\{p_1\otimes p_2\,|\,p_i\in C_{e_i}(E_i), i=1,2\}$ is order dense in $C_{e_1\otimes e_2}(\dedotimes{E_1}{E_2})$, also $T_1\otimes_\delta T_2$
 and $(T_1\otimes_\delta T_2)(S_1\otimes_\delta S_2)$ are order continuous on $\dedotimes{E_1}{E_2}$, giving
 that $(T_1\otimes_\delta T_2)(S_1\otimes_\delta S_2)=T_1\otimes_\delta T_2$ on $C_{e_1\otimes e_2}(\dedotimes{E_1}{E_2})$ and thus on $\dedotimes{E_1}{E_2}$.
 \qed

\begin{proposition}
Let $E$ and $F$ be Dedekind complete Riesz spaces with weak order units $e$ and $f$ respectively.
Let ${\parent{p_n}}$ and ${\parent{q_n}}$ be a sequence in $C_e(E)$ and $C_f(F)$ respectively, with at least one of them  density zero, then $(p_n\otimes q_n)$ is a density zero sequence components of  $e\otimes f$ in $\dedotimes{E}{ F}$.
\end{proposition}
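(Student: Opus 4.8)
\proof (Proposal.)
The plan is to avoid the Koopman--von Neumann machinery entirely and argue by domination together with the order continuity of the tensor map. Without loss of generality assume that $(p_n)$ is the density zero sequence, so that $\frac1n\sum_{k=0}^{n-1}p_k\to 0$ in order in $E$. First I would record that each $p_n\otimes q_n$ is a component of $e\otimes f$ by Theorem~\ref{density-components}(a), so that $(p_n\otimes q_n)$ is genuinely a sequence of components of $e\otimes f$ and Definition~\ref{subset of density zero definition} is applicable to it; it then remains only to check that its Cesàro averages converge to $0$ in order.

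Next I would set up the domination. Since $0\le q_n\le f$ and $p_n\ge 0$, positivity of the Fremlin tensor product on positive elements gives $p_n\otimes(f-q_n)\ge 0$, hence $0\le p_n\otimes q_n\le p_n\otimes f$ for every $n$. Summing and using bilinearity of $\otimes$ on finite sums, these inequalities pass through to
\[
0\le \frac1n\sum_{k=0}^{n-1}p_k\otimes q_k\le \left(\frac1n\sum_{k=0}^{n-1}p_k\right)\otimes f ,
\]
and since $E\overline{\otimes}F$ is a Riesz subspace of $\dedotimes{E}{F}$ this holds in $\dedotimes{E}{F}$ as well.

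Finally I would invoke Grobler's result (recalled in Section~3) that the map $(x,y)\mapsto x\otimes_\delta y$ from $E\times F$ to $\dedotimes{E}{F}$ is order continuous: applying it to the order null sequence $\frac1n\sum_{k=0}^{n-1}p_k\to 0$ in the first variable (with the second variable fixed at $f$) shows the right-hand side above converges in order to $0\otimes f=0$. The elementary Riesz-space sandwich for order convergence---if $0\le a_n\le b_n$ and $b_n\to 0$ in order then $a_n\to 0$ in order---then forces $\frac1n\sum_{k=0}^{n-1}p_k\otimes q_k\to 0$ in order, which is the claim. The case in which $(q_n)$ rather than $(p_n)$ is density zero is identical, dominating instead by $\left(\frac1n\sum_{k=0}^{n-1}q_k\right)$ in the second slot. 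I do not expect a genuine obstacle here; the only point needing a moment's care is that the order continuity statement is for the bilinear map with one variable frozen, and that the domination genuinely survives the passage to the Dedekind completion---both of which are immediate from the facts already cited.
\qed
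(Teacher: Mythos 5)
Your proposal is correct and follows essentially the same route as the paper's own proof: domination $0\le p_n\otimes q_n\le p_n\otimes f$, linearity of the tensor map on the Ces\`aro sums, Grobler's order continuity of $(x,y)\mapsto x\otimes_\delta y$ to send the dominating sequence to $0\otimes f=0$, and the order-convergence sandwich. The paper does not use Koopman--von Neumann here either, so there is nothing to add.
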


\begin{proof} 
We assume ${\parent{p_n}}\subset C_e(E)$ and ${\parent{q_n}}\subset C_f(F)$ with ${\parent{p_n}}$ a density zero sequence of components of $e$.
By assumption, $$\frac{1}{n} \sum_{k=0}^{n-1}p_ke \rightarrow 0,$$
 in order, as $n \rightarrow \infty$ and 
 $$0\le \frac{1}{n} \sum_{k=0}^{n-1}q_k  \le f,$$ for all $n\in\N$. 
 Let $r_n:=p_n \otimes q_n$ for each $n \in {\mathbb{N}}_0$, then $(r_n)\subset C_{e\otimes f}(\dedotimes{E}{ F})$
 with $0\leq r_n= p_n \otimes q_n \leq p_n \otimes f$, so
	\begin{equation*}
	0\le \frac{1}{n} \sum_{k=0}^{n-1} r_k
	 \leq
	\frac{1}{n} \sum_{k=0}^{n-1} \parent{p_k \otimes f}
	=	\parent{\frac{1}{n} \sum_{k=0}^{n-1}p_k} \otimes f
	 \rightarrow 0 \otimes f=0,
	\end{equation*}
in order, as $n \rightarrow \infty$. Here we have used the order continuity of the tensor product in $\dedotimes{E}{ F}$, see \cite{grobler}, as well as that, as $\dedotimes{E}{F}$ is the Dedekind completion of the Archimedean Riesz space $E\overline{\otimes} F$,  if a net $(G_\alpha)$ in $E\overline{\otimes} F$ converges in $\dedotimes{E}{F}$ to $G\in E\overline{\otimes} F$ then $(G_\alpha)$ converges in $E\overline{\otimes} F$ to $G$, see \cite[Theorem 32.2]{L-Z}.  
	\qed
\end{proof}


\section{Ergodicity and mixing in tensor product space}

To access the required properties of the tensor product we work via bilinear maps. The following proposition follows directly from the 
properties of multiplication in the $f$-algebra $E_e$.

\begin{proposition}
\label{J proposition}
Let $E$ be a Dedekind complete Riesz space with weak order unit, $e$, where $e$ is also taken to be the algebraic unit of the $f$-algebra $E_e$.
Let $J: E \times E \rightarrow E$ be defined by $\func{J}{f,g}=f \cdot g$ for all $f,g \in E_e$, then $J$ is bilinear, positive and order continuous on $E_e$.
\end{proposition}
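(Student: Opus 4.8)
The plan is to dispose of bilinearity and positivity at once from the $f$-algebra axioms, and then to concentrate on order continuity, which is the only point needing any argument.

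Bilinearity of $J$ is immediate: it is just the distributive laws in the ring $E_e$ together with the compatibility of scalar multiplication with the product, so $J(\alpha f_1+f_2,g)=\alpha J(f_1,g)+J(f_2,g)$ and symmetrically in the second variable. For positivity, one uses that $E_e$ is an Archimedean $f$-algebra with algebraic unit $e$, and that in any Archimedean $f$-algebra the product of two positive elements is positive (see the $f$-algebra references cited, e.g.\ \cite{zaanen2}); hence $f,g\in(E_e)_+$ forces $J(f,g)=f\cdot g\ge 0$, i.e.\ $J$ is positive.

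The substance is order continuity. Since $J$ is bilinear it suffices to establish separate order continuity, and since $E_e$ is commutative it is enough to show that for each fixed $g\in E_e$ the multiplication map $M_g\colon f\mapsto f\cdot g$ is order continuous on $E_e$; decomposing $g=g^+-g^-$ reduces this to the case $g\in(E_e)_+$. For such a $g$ choose $k\in\R_+$ with $g\le ke$. Then $M_g$ is positive and $0\le M_gf\le kf$ for all $f\in(E_e)_+$, so $M_g$ is order bounded. Since a positive operator is order continuous exactly when it carries nets $f_\alpha\downarrow 0$ to nets $M_gf_\alpha\downarrow 0$, I would take $f_\alpha\downarrow 0$ in $E_e$: the net $(g\cdot f_\alpha)$ is decreasing and bounded below by $0$, so it has an infimum $h\ge 0$, and from $0\le g\cdot f_\alpha\le kf_\alpha$ with $kf_\alpha\downarrow 0$ one gets $h\le\inf_\alpha kf_\alpha=0$, hence $h=0$. (Alternatively, the $f$-algebra identity $a\wedge b=0\Rightarrow(ga)\wedge b=0$ shows $M_g$ is band preserving, hence an orthomorphism on the Archimedean space $E_e$, and orthomorphisms are automatically order continuous.) Running this in both slots gives order continuity of $J$ in each variable separately; if joint order continuity on order bounded sets is also wanted, one estimates $|J(f,g)-J(f_0,g_0)|\le|f-f_0|\cdot|g|+|f_0|\cdot|g-g_0|$ and passes to the limit using the $e$-bounds, which again reduces to the separate case.

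I do not anticipate a real obstacle: the proposition merely repackages standard $f$-algebra facts about $E_e$ viewed as the principal ideal generated by $e$. The single point requiring care is the order continuity claim, where it is essential that the $f$-algebra structure lives on $E_e$ (not on all of $E$) and that $g$ is $e$-bounded, so that multiplication by $g$ is order bounded; the sandwich $0\le g\cdot f_\alpha\le kf_\alpha$ then closes the argument.
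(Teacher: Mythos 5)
Your proof is correct. The paper in fact offers no proof of this proposition --- it simply remarks that the statement ``follows directly from the properties of multiplication in the $f$-algebra $E_e$'' --- so your write-up supplies exactly the details the authors leave implicit. The one step with real content, order continuity, is handled properly: reducing to a single positive factor $g \le ke$ and using the sandwich $0 \le g \cdot f_\alpha \le k f_\alpha$ for $f_\alpha \downarrow 0$ is the standard argument (and your alternative via orthomorphisms is equally valid), with the joint continuity on order bounded sets recovered from the estimate $|J(f,g)-J(f_0,g_0)| \le |f-f_0|\cdot|g| + |f_0|\cdot|g-g_0|$.
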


\begin{remark}
\label{remark +}
Working in the category of Dedekind complete Riesz spaces, see \cite{grobler}, let $E$ be a Dedekind complete Riesz space with weak order unit $e$ and let $J$ be as in Proposition {\ref{J proposition}}, then the induced map $j: \dedotimes{E_e}{ E_e} \rightarrow E_e$ with additivity defined by $\func{j}{f \otimes g + p \otimes q}=J(f,g)+J(p,q)=fg+pq$ is linear and order continuous.
\end{remark}

Let $\parent{E_i,T_i,S_i,e_i}, i=1,2,$ be conditional expectation preserving systems. If their Dedekind complete tensor product, $(\dedotimes{E_1}{E_2},T_1\otimes_\delta T_2, S_1\otimes_\delta S_2, e_1\otimes e_2)$, is ergodic, then by \eqref{weak mixing section first equation} of Corollary \ref{corollary for mixing} we have that
	\begin{equation}
	\label{tensor ergodic equation}
	\frac{1}{n}\sum_{k=0}^{n-1} T_1((S_1^kp_1)\cdot q_1)\otimes T_2((S_2^kp_2)\cdot q_2) \to  (T_1p_1\cdot T_1q_1)\otimes (T_2p_2\cdot T_2q_2),
	\end{equation}
in order in $\dedotimes{E_1}{E_2}$, as $n \to \infty$,  for all $p_i,q_i\in C_{e_i}(E_i)$, $i=1,2$.
Similarly if this tensor product is conditionally weak mixing, from Theorem \ref{weak mixing big theorem}, we have that 
	\begin{equation}
	\label{tensor weak mixing equation}
	\frac{1}{n}\sum_{k=0}^{n-1}
	\abs{
	T_1((S_1^kp_1)\cdot q_1)\otimes T_2((S_2^kp_2)\cdot q_2) (T_1p_1\cdot T_1q_1)\otimes (T_2p_2\cdot T_2q_2)}
	\to 	0,
	\end{equation}
in order in $\dedotimes{E_1}{E_2}$, as $n \to \infty$,  for all $p_i,q_i\in C_{e_i}(E_i)$, $i=1,2$.

\begin{lemma}\label{tensor-ergodic}
Let $\parent{E,T,S,e}$ and $\parent{E_1,T_1,S_1,e_1}$, with $T$ and $T_1$ strictly positive be 
conditional expectation preserving systems.
\begin{itemize}
\item[(a)] If $\parent{\dedotimes{E}{E_1}, T \otimes_\delta T_1, S \otimes_\delta S_1, e \otimes e_1}$ is ergodic  then
$\parent{E,T,S,e}$ and $\parent{E_1,T_1,S_1,e_1}$ are ergodic.
\item[(b)] If $\parent{\dedotimes{E}{E_1}, T \otimes_\delta T_1, S \otimes_\delta S_1, e \otimes e_1}$ is  conditionally weak mixing then
$\parent{E,T,S,e}$ and $\parent{E_1,T_1,S_1,e_1}$ are conditionally weak mixing.
\end{itemize}
\end{lemma}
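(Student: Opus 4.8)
The two parts are structurally parallel, so I would set up the following reduction once and apply it twice. The key tool is the order-continuous linear map $j \colon \dedotimes{E_e}{(E_1)_{e_1}} \to (E_1)_{e_1}$ of Remark \ref{remark +}, but with one factor "trivialised": more precisely, I would use the map induced by the bilinear map $(f,g) \mapsto (T_1 g)\cdot f$ — equivalently, first apply $T\otimes_\delta T_1$ to land in $\range{T}\otimes_\delta\range{T_1}$, then pair against a fixed component. The cleanest route is to integrate out the second coordinate by composing with a state. Since $T_1$ is strictly positive and $T_1 e_1 = e_1$, the functional $f \mapsto$ "evaluate $T_1 f$ at the relevant component" behaves like a faithful trace; in the Riesz-space setting the surrogate is: fix $q_1 = e_1$ and $p_1 = e_1$ in \eqref{tensor ergodic equation} (resp. \eqref{tensor weak mixing equation}), which makes the second tensor factor collapse to $T_1((S_1^k e_1)\cdot e_1) = T_1 e_1 = e_1$ and $T_1 e_1 \cdot T_1 e_1 = e_1$. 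Then apply the order-continuous map $g \otimes h \mapsto$ (something extracting $g$ when $h$ is a multiple of $e_1$) to peel off the constant factor.

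Concretely, for part (a): assume $\parent{\dedotimes{E}{E_1}, T\otimes_\delta T_1, S\otimes_\delta S_1, e\otimes e_1}$ is ergodic. By Corollary \ref{corollary for mixing} applied in the tensor product, together with Theorem \ref{density-components} (which tells us the components $p\otimes p_1$, $q\otimes q_1$ generate all components of $e\otimes e_1$ and the relevant maps are order continuous), \eqref{tensor ergodic equation} holds for all $p,q \in C_E(e)$ and $p_1,q_1 \in C_{E_1}(e_1)$. Specialise to $p_1 = q_1 = e_1$: the right-hand side becomes $(Tp\cdot Tq)\otimes e_1$ and the $k$-th summand on the left becomes $T((S^kp)\cdot q)\otimes e_1$. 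Now I would invoke the order continuity and linearity of the canonical projection-type map $\Phi \colon \dedotimes{E}{E_1} \to E$ determined on elementary tensors by $\Phi(g\otimes h) = g \cdot (\text{coefficient of } h \text{ along } e_1)$ — or, more robustly, use that $g \mapsto g\otimes e_1$ is an order-continuous Riesz isomorphism of $E$ onto the band $E \otimes_\delta \{e_1\text{-multiples}\}$ and apply its inverse. Either way, \eqref{tensor ergodic equation} with $p_1=q_1=e_1$ transports to $\frac{1}{n}\sum_{k=0}^{n-1} T((S^kp)\cdot q) \to Tp\cdot Tq$ in order in $E$, for all $p,q \in C_E(e)$, which is exactly the ergodicity criterion \eqref{weak mixing section first equation} for $\parent{E,T,S,e}$ via Corollary \ref{corollary for mixing}. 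The symmetric choice $p=q=e$ (using $Te=e$, $T((S^ke)\cdot e)=Te=e$) yields ergodicity of $\parent{E_1,T_1,S_1,e_1}$.

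For part (b) the argument is identical, replacing Corollary \ref{corollary for mixing} and \eqref{tensor ergodic equation} by Theorem \ref{weak mixing big theorem} (equivalence of items \ref{weak mixing big theorem 1} and \ref{weak mixing big theorem 2}) and \eqref{tensor weak mixing equation}: specialising $p_1 = q_1 = e_1$ and pushing through the order-continuous isomorphism $g\mapsto g\otimes e_1$ turns the tensor-product weak-mixing average into $\frac{1}{n}\sum_{k=0}^{n-1}\abs{T((S^kp)\cdot q) - Tp\cdot Tq} \to 0$, which is conditional weak mixing of $\parent{E,T,S,e}$; the choice $p=q=e$ handles the other factor. The one point requiring care — and I expect it to be \emph{the} main obstacle — is justifying the existence and order continuity of the "coordinate extraction" map that strips off the $e_1$ factor: the naive elementary-tensor formula need not be well defined on $\dedotimes{E}{E_1}$. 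The safe workaround is to avoid extracting a general coordinate and instead only use the band projection onto (equivalently, the canonical identification with) the sub-object $E \otimes_\delta \R e_1 \cong E$, which is legitimate because $e_1$ is a component of itself and $\{e_1\text{-multiples}\}$ is a projection band in $(E_1)_{e_1}$; order continuity of the tensor operation (Grobler, cited in Section 3) then carries order convergence across. Alternatively one can compose with $\mathrm{id}\otimes_\delta \omega$ where $\omega$ is any order-continuous Riesz homomorphism $(E_1)_{e_1}\to\R$ with $\omega(e_1)=1$, but the projection-band route keeps everything internal to Riesz spaces and matches the paper's framework.
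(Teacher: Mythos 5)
Your proposal matches the paper's proof in essence: the paper likewise specialises the second pair of components to the weak order unit in \eqref{tensor ergodic equation} (resp.\ \eqref{tensor weak mixing equation}), obtains $\frac{1}{n}\sum_{k=0}^{n-1} T((S^kp)\cdot q)\otimes e_1 \to (Tp\cdot Tq)\otimes e_1$, strips off the $\otimes\, e_1$ factor to conclude for $\parent{E,T,S,e}$, and uses the symmetric substitution for the other factor. The one caveat is that $E\otimes_\delta\{e_1\text{-multiples}\}$ is a Riesz subspace but not in general a band, so the ``peeling off'' step should be justified via Grobler's order continuity of $(f,g)\mapsto f\otimes_\delta g$ (so that infima and suprema of elements $b_n\otimes e_1$ are computed coordinatewise) rather than via a band projection --- a point the paper itself passes over with ``hence''.
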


\begin{proof}
{\bf (a)} Let $p,q\in C_e(E)$ and $p_1=p, \, p_1=p, \, p_2=e=q_2$ in \eqref{tensor ergodic equation}. By Corollary \ref{corollary for mixing},
\begin{equation*} 
\frac{1}{n}\sum_{k=0}^{n-1} T((S^kp)\cdot q)\otimes e \to  (Tp\cdot Tq)\otimes e,
\end{equation*}
in order as $n\to\infty$, and hence
\begin{equation*} 
\frac{1}{n}\sum_{k=0}^{n-1} T((S^kp)\cdot q) \to  Tp\cdot Tq,
\end{equation*}
in order as $n\to\infty$, showing that $\parent{E,T,S,e}$ is ergodic.
To prove that $\parent{E_1,T_1,S_1,e_1}$ ergodic is similar.

{\bf (b)} Let $p,q\in C_e(E)$ and $p_1=p, \, p_1=p, \, p_2=e=q_2$ in \eqref{tensor weak mixing equation}, giving
\begin{equation*} 
\frac{1}{n}\sum_{k=0}^{n-1}| T((S^kp)\cdot q)\otimes e -  (Tp\cdot Tq)\otimes e|\to 0,
\end{equation*}
in order as $n\to\infty$. Thus
\begin{equation*} 
\frac{1}{n}\sum_{k=0}^{n-1}| T((S^kp)\cdot q) -  Tp\cdot Tq|\to 0,
\end{equation*}
in order as $n\to\infty$, showing that $\parent{E,T,S,e}$ is conditionally weak mixing. To prove that $\parent{E_1,T_1,S_1,e_1}$ is weak mixing is similar. 
\qed
\end{proof}

We are now in a position to prove the main result of the paper, Theorem \ref{product spaces theorem}.

\begin{proof}{\bf (of Theorem \ref{product spaces theorem})}\\
{\bf ({\ref{product spaces theorem 1}})$\Rightarrow$({\ref{product spaces theorem 4}}):}\\
We assume that $\parent{E,T,S,e}$ is conditionally weak mixing and $\parent{E_1,T_1,S_1,e_1}$ is ergodic.
Let $p.q\in C_e(E)$ and $p_1, q_1\in C_{e_1}(E_1)$. Now,
\begin{eqnarray*}
 K_n&:=&\frac{1}{n}\sum_{j=0}^{n-1} (T\otimes_\delta T_1)((S\otimes_\delta S_1)^k(p\otimes p_1)\cdot (q\otimes q_1))\\
 &=&\frac{1}{n}\sum_{j=0}^{n-1} (T\otimes_\delta T_1)((S^kp\otimes S_1^kp_1)\cdot (q\otimes q_1))\\
 &=&\frac{1}{n}\sum_{j=0}^{n-1} (T\otimes_\delta T_1)((S^kp\cdot q)\otimes (S_1^kp_1\cdot q_1))\\
 &=&\frac{1}{n}\sum_{j=0}^{n-1} T(S^kp\cdot q)\otimes T_1(S_1^kp_1\cdot q_1 )\\
 &=&\frac{1}{n}\sum_{j=0}^{n-1} \alpha_k\otimes \beta_k,
 \end{eqnarray*}
where
$\alpha_k:=T((S^kp)\cdot q)$, $\alpha:=Tp\cdot Tq$,
$\beta_k:=T_1((S_1^kp_1)\cdot q_1)$ and 
$\beta:=T_1p_1\cdot T_1q_1$.
As $\parent{E_1,T_1,S_1,e_1}$ is ergodic,
 $$\frac{1}{n}\sum_{j=0}^{n-1}\beta_k -\beta\to 0$$
 in order in $E_1$ as $n\to\infty$,  and, since $\parent{E,T,S,e}$ is conditionally weak mixing,
$$\frac{1}{n}\sum_{j=0}^{n-1}|\alpha_k-\alpha|\to 0$$ 
in order in $E$ as $n\to\infty$.
Hence,
\begin{eqnarray*}
|K_n-\alpha\otimes\beta|&\le&\alpha\otimes\left|\frac{1}{n}\sum_{j=0}^{n-1}\beta_k -\beta\right|+\left(\frac{1}{n}\sum_{j=0}^{n-1}|\alpha_k-\alpha|\right)\otimes \beta_k\to 0,\end{eqnarray*}
in order in $\dedotimes{E}{E_1}$, as $n\to\infty$, giving that
 $K_n\to \alpha\otimes\beta$ in order  in $\dedotimes{E}{E_1}$, as $n\to \infty$.
 Now working in $(\dedotimes{E}{E_1})_{e\otimes e_1}$, i.e. the ideal of $\dedotimes{E}{E_1})$ generated by $e\otimes e_1$, and using \eqref{Sn definition}-\eqref{L definition}, we have that 
 $$\func{L_{S \otimes_\delta  S_1}}{p \otimes p_1}= \lim_{n \to \infty} \frac{1}{n} \sum_{k=1}^{n-1} {\parent{S \otimes_\delta  S_1}}^k \parent{p\otimes p_1},$$
  where the limit is the order limit. Thus
 $$(T\otimes_\delta T_1)(\func{L_{S \otimes_\delta  S_1}}{p \otimes p_1}\cdot (q\otimes q_1)) =(Tp\cdot Tq )\otimes (T_1p_1\cdot T_1q_1).$$
 Here
 $$(Tp\cdot Tq )\otimes (T_1p_1\cdot T_1q_1)=((Tp)\otimes (T_1p_1))\cdot((Tq )\otimes (T_1q_1))$$
 and
 $$((Tp)\otimes (T_1p_1))\cdot((Tq )\otimes (T_1q_1))=((T\otimes_\delta T_1) (p\otimes p_1 ))\cdot((T\otimes_\delta T_1)  (q \otimes q_1)).$$
 The averaging property of conditional expectation operators, see \cite{expectation paper}, can now be applied to give
 $$((T\otimes_\delta T_1) (p\otimes p_1 ))\cdot((T\otimes_\delta T_1)  (q \otimes q_1))=(T\otimes_\delta T_1)((T\otimes_\delta T_1) (p\otimes p_1 ))\cdot(q \otimes q_1)).$$
 Combining the above gives
 $$(T\otimes_\delta T_1)((\func{L_{S \otimes_\delta  S_1}}{p \otimes p_1}-(T\otimes_\delta T_1) (p\otimes p_1 )))\cdot(q \otimes q_1)) =0.$$
 The order continuity and strict positivity of $T\otimes_\delta T_1$ along with the order density,  by Proposition \ref{density-components}, of 
  $\{p_1\otimes p_2\,|\,p_i\in C_{e_i}(E_i), i=1,2\}$  in $C_{e_1\otimes e_2}(\dedotimes{E_1}{E_2})$ applied to the
  above equiality now give
  $$\func{L_{S \otimes_\delta  S_1}}{p \otimes p_1}=(T\otimes_\delta T_1) (p\otimes p_1 ).$$
Again applying the  order continuity of $T\otimes_\delta T_1$ and $L_{S \otimes S_1}$ with the order density of 
  $\{p_1\otimes p_2\,|\,p_i\in C_{e_i}(E_i), i=1,2\}$  in $C_{e_1\otimes e_2}(\dedotimes{E_1}{E_2})$
we have that 
 $L_{S \otimes_\delta  S_1}=T\otimes_\delta T_1$ on the ideal $(\dedotimes{E}{E_1})_{e\otimes e_1}$ and thus on $\dedotimes{E}{E_1}$ making $(\dedotimes{E}{E_1},T\otimes_\delta T_1,S\otimes_\delta S_1, e\otimes e_1)$ ergodic by Corollary~\ref{corollary for mixing}.

{\bf ({\ref{product spaces theorem 4}})$\Rightarrow$({\ref{product spaces theorem 5}}):}\\
By Lemma \ref{tensor-ergodic} $\parent{E,T,S,e}$ is ergodic, so we can take $\parent{E_1,T_1,S_1,e_1}=\parent{E,T,S,e}$.

{\bf ({\ref{product spaces theorem 5}})$\Rightarrow$({\ref{product spaces theorem 1}}):}\\
Assume that $\parent{\dedotimes{E}{E},T \otimes_\delta T, S \otimes_\delta S,e\otimes e}$ is ergodic.
Let $p,q\in C_e(E)$. By Lemma \ref{tensor-ergodic}, $\parent{E,T,S,e}$ is ergodic.
Denote $\alpha_k:=T((S^kp)\cdot q)$ and $\alpha=Tp\cdot Tq$, then \eqref{weak mixing section first equation} gives 
\begin{equation}\label{4-11} 
\frac{1}{n}\sum_{k=0}^{n-1} \alpha_k \to  \alpha,
\end{equation}
 in order as $n\to\infty$.
Taking $p_1=p=p_2$ and $q_1=q=q_2$ in (\ref{tensor ergodic equation}), assumption ({\ref{product spaces theorem 5}}) of   Theorem \ref{product spaces theorem} gives 
\begin{equation}\label{5-11} 
\frac{1}{n}\sum_{k=0}^{n-1} \alpha_k \otimes \alpha_k \to \alpha\otimes \alpha,
\end{equation}
 in $\dedotimes{E}{E}$, in order as $n\to \infty$.
 Furthermore, setting $f_k:={\alpha}_k-\alpha$,
	\begin{align*}
	&\frac{1}{n} \sum_{k=0}^{n-1} f_k	\otimes	f_k	\\
	&=	\frac{1}{n} \sum_{k=0}^{n-1} {\alpha}_k \otimes {\alpha}_k-\parent{\frac{1}{n} \sum_{k=0}^{n-1} {\alpha}_k	}
	\otimes 	\alpha -	\alpha\otimes\parent{\frac{1}{n} \sum_{k=0}^{n-1} {\alpha}_k	} +\alpha \otimes \alpha \\
	& \rightarrow \alpha \otimes \alpha 	- \alpha \otimes \alpha-\alpha \otimes \alpha+\alpha \otimes \alpha=0,
	\end{align*}
 in $\dedotimes{E}{E}$, in order, as $n\to\infty$.
Let $J$ be as in Proposition \ref{J proposition} and $j$ be as in Remark~\ref{remark +}. Since $\displaystyle \frac{1}{n} \sum_{k=0}^{n-1} f_k \otimes f_k \rightarrow \underline{0}$, in $\dedotimes{E}{E}$, in order, as $n \rightarrow \infty$,
by Remark \ref{remark +} we have,
	\begin{subequations}
	\begin{align*}
	\frac{1}{n} \sum_{k=0}^{n-1} f_k^2
	&=\frac{1}{n} \sum_{k=0}^{n-1} \func{j}{f_k \otimes f_k}=
	\func{j}{\frac{1}{n} \sum_{k=0}^{n-1} f_k \otimes f_k}
	\rightarrow \func{j}{0}=0,
	\end{align*}
	\end{subequations}
in $E_e$ (and thus in $E$), in order, as $n \rightarrow \infty$. 
So, by Theorem~\ref{koopman-von neumann lemma} (the Riesz space extension of the Koopman-von Neumann Lemma)  there exists a density zero sequence of components of $e$ in $E$, say $\parent{r_n}$, such that $\parent{e-r_n} \cdot f_k^2 \rightarrow 0$, in $E$, in order, as $n \rightarrow \infty$. 
Hence, $\parent{e-r_n} |f_k| \rightarrow 0$, in order, as $n \rightarrow \infty$ and by Theorem~\ref{weak mixing big theorem} $(E,T,S,e)$ is conditionally weak mixing.
  \qed
\end{proof}

 \end{document}